\newtheorem{thm}{Theorem} [section]
\newtheorem{lem}[thm]{Lemma}
\theoremstyle{definition}
\theoremstyle{remark}
\numberwithin{equation}{section}
\newcommand{\F}{\mathbb{F}}
\begin{document}
\title{On the number of zeros of diagonal cubic forms over finite fields}
\begin{abstract}
Let $\F_q$ be the finite field with $q=p^k$ elements with
$p$ being a prime and $k$ be a positive integer. For any $y, z\in\mathbb{F}_q$,
let $N_s(z)$ and $T_s(y)$ denote the numbers of zeros of $x_1^{3}+\cdots+x_s^3=z$
and $x_1^3+\cdots+x_{s-1}^3+yx_s^3=0$, respectively.
Gauss proved that if $q=p, p\equiv1\pmod3$ and $y$ is non-cubic, then
$T_3(y)=p^2+\frac{1}{2}(p-1)(-c+9d)$, where $c$ and $d$ are uniquely determined by
$4p=c^2+27d^2,~c\equiv 1 \pmod 3$ except for the sign of $d$. In 1978, Chowla,
Cowles and Cowles determined the sign of $d$ for the case of $2$ being a non-cubic
element of $\F_p$. But the sign problem is kept open for the remaining case of $2$ being cubic in $\F_p$. In this paper, we solve this sign problem by determining
the sign of $d$ when $2$ is cubic in $\F_p$. Furthermore, we show that the generating
functions $\sum_{s=1}^{\infty} N_{s}(z) x^{s}$ and $\sum_{s=1}^{\infty} T_{s}(y)x^{s}$
are rational functions for any $z, y\in\mathbb F_q^*:=\mathbb F_q\setminus \{0\}$ with
$y$ being non-cubic over $\F_q$ and also give their explicit expressions.
This extends the theorem of Myerson and that of Chowla, Cowles and Cowles.
\end{abstract}

\author[S.F. Hong]{Shaofang Hong$^*$}
\address{Mathematical College, Sichuan University, Chengdu 610064, P.R. China}
\email{sfhong@scu.edu.cn; s-f.hong@tom.com; hongsf02@yahoo.com}
\author[C.X. Zhu]{Chaoxi Zhu}
\address{Mathematical College, Sichuan University, Chengdu 610064, P.R. China}
\email{zhuxi0824@126.com}
\thanks{$^*$S.F. Hong was supported partially by National 
Science Foundation of China Grant \#11771304.}
\keywords{Exponential sum, Gauss sum, Jacobi sum, generating function,
diagonal cubic form, Hasse-Davenport relation.}
\subjclass[2000]{Primary 11T23,11T24} 
\maketitle

\section{Introduction}
Let $p$ be a prime number and let $k$ and $s$ be positive integers. Denote
by $\mathbb{F}_q$ the finite field of $q=p^k$ elements and let
$f(x_1, ..., x_s)\in \mathbb{F}_q[x_1, ..., x_s]$ be a polynomial in $s\ge 2$
indeterminates. Counting the number $N(f=0)$ of zeros $(x_1, ..., x_s)\in \mathbb{F}_q^s$
of the equation $f(x_1, ..., x_s)=0$ is an important and fundamental topic
in number theory and finite field. An explicit formula for $N(f=0)$ is known
when $\deg(f)\le 2$ (see \cite{[IR]} or \cite{[LN]}). The $p$-adic behavior
of $N(f=0)$ has been deeply studied by many authors (see, for example,
\cite{[AS]}, \cite{[Ax]}, \cite{[Che]}, \cite{[K]}, \cite{[MM]},
\cite{[Wan2]} and \cite{[War]}).
Finding the number of solutions $(x_1, ... , x_s)\in \mathbb{F}_q^s$
of the general diagonal equation $a_1x_1^{d_1}+...+a_sx_s^{d_s}=b \
(s\ge2, a_1, ..., a_s\in\mathbb{F}_q^*:=\mathbb F_q\setminus \{0\}, b\in\mathbb{F}_q)$
over $\mathbb{F}_q$ is a difficult problem. The special case where all
the $d_i$ are equal has extensively been studied (see, for example,
\cite{[CL]}, \cite{[J]}, \cite{[Mor]}, \cite{[Wan1]} and \cite{[WJ1]}).
This is the example chosen by Weil \cite{[We]} to illustrate his
renowned conjecture on projective varieties over finite
fields, which was proved later by Deligne \cite{[De]}. Even in this
special situation, only a few special cases are known where the number
of solutions can be explicitly calculated.

For an element $z\in\mathbb F_q$, one lets $N_s(z)$ denote the number of zeros
$(x_1, ..., x_s)\in\mathbb{F}_q^s$ of the following diagonal cubic
form over $\mathbb F_q$:
\begin{equation*}\label{4}
x_1^3+\cdots+x_s^3=z.
\end{equation*}
Chowla, Cowles and Cowles \cite{[CCC77]} \cite{[CCC78]} initiated
the investigation of $N_s(0)$. When $k=1$, i.e. $q=p$, it is easy
to see that $N_s(0)=p^{s-1}$ if $p\equiv2\pmod3$. However, for the
case $p\equiv1\pmod3$, the situation becomes complicated.
Chowla, Cowles and Cowles \cite{[CCC77]} proved that
the generating function $\sum_{s=1}^{\infty} N_s(0) x^{s}$
is a rational function of $x$. Actually, they showed that the following is true:
$$
\sum_{s=1}^{\infty} N_s(0) x^{s}=\frac{x}{1-p x}+\frac{(p-1)(2+cx)x^{2}}{1-3 p x^{2}-p c x^{3}},
$$
where $c$ is uniquely determined by $4p=c^{2}+27d^{2}~~\text {and}~~c\equiv1\pmod3.$
From this, one can read an expression of $N_s(0)$ for each integer $s\ge 1$.
Two years later, Myerson extended the Chowla-Cowles-Cowles theorem
by proving the following result.
\begin{thm}\label{MMMM} \textup{\cite{[Myerson]}}
Let $q\equiv1\pmod3$. Then
$$
\sum_{s=1}^{\infty} N_s(0) x^{s}=\frac{x}{1-q x}+\frac{(q-1)(2+cx)x^{2}}{1-3 q x^{2}-q c x^{3}},
$$
where $c$ is uniquely determined by
\begin{equation}\label{4'}
4q=c^2+27d^2,~c\equiv1~(\bmod~3)~\text {\it and~if~}~~ p\equiv1~(\bmod~3), \ {\it then}~~(c,p)=1.
\end{equation}
\end{thm}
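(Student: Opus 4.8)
The plan is to compute the generating function $\sum_{s\ge1}N_s(0)x^s$ directly, by expanding $N_s(0)$ in additive and multiplicative characters and reducing everything to the cubic Gauss sum over $\mathbb{F}_q$. Fix a nontrivial additive character $\psi$ of $\mathbb{F}_q$ and a multiplicative character $\chi$ of $\mathbb{F}_q^*$ of exact order $3$, which exists because $q\equiv1\pmod3$; set $g:=\sum_{x\in\mathbb{F}_q}\chi(x)\psi(x)$ and $\bar g:=\sum_{x\in\mathbb{F}_q}\bar\chi(x)\psi(x)$, so that $g\bar g=q$. Since the number of $x\in\mathbb{F}_q$ with $x^3=a$ equals $1+\chi(a)+\bar\chi(a)$ for $a\ne0$, a short computation gives $\sum_{x\in\mathbb{F}_q}\psi(tx^3)=\bar\chi(t)g+\chi(t)\bar g$ for $t\in\mathbb{F}_q^*$. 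Orthogonality of $\psi$ then yields
\begin{equation*}
N_s(0)=\frac1q\sum_{t\in\mathbb{F}_q}\Big(\sum_{x\in\mathbb{F}_q}\psi(tx^3)\Big)^s=q^{s-1}+\frac1q\sum_{t\in\mathbb{F}_q^*}\big(\bar\chi(t)g+\chi(t)\bar g\big)^s .
\end{equation*}

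Next I would assemble the generating function. The term $q^{s-1}$ contributes $\sum_{s\ge1}q^{s-1}x^s=\frac{x}{1-qx}$. In the remaining sum, expand $(\bar\chi(t)g+\chi(t)\bar g)^s$ by the binomial theorem, note $\bar\chi(t)^j\chi(t)^{s-j}=\chi(t)^{s-2j}$, and sum over $t\in\mathbb{F}_q^*$: by orthogonality of $\chi$ only the indices $j$ with $3\mid s+j$ survive, each contributing a factor $q-1$. Encoding this congruence with a cube-root-of-unity filter gives $\frac1q\sum_{t\in\mathbb{F}_q^*}(\bar\chi(t)g+\chi(t)\bar g)^s=\frac{q-1}{3q}\sum_{\omega^3=1}A_\omega^s$, where $A_\omega:=\omega^2g+\omega\bar g$. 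Interchanging the finite sum over $\omega$ with the sum over $s$ (legitimate in $\mathbb{C}[[x]]$) then gives
\begin{equation*}
\sum_{s\ge1}N_s(0)x^s=\frac{x}{1-qx}+\frac{q-1}{3q}\sum_{\omega^3=1}\frac{A_\omega x}{1-A_\omega x},
\end{equation*}
which already exhibits the left-hand side as a rational function of $x$.

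To reach the stated closed form I would put the last sum over the common denominator $\prod_{\omega^3=1}(1-A_\omega x)$ and compute the elementary symmetric functions of $A_1,A_\zeta,A_{\zeta^2}$, where $\zeta=e^{2\pi i/3}$. Using $\sum_{\omega^3=1}\omega=\sum_{\omega^3=1}\omega^2=0$ together with $g\bar g=q$, one finds $e_1=0$, $e_2=-3q$, and $e_3=A_1A_\zeta A_{\zeta^2}=(g+\bar g)(g^2-g\bar g+\bar g^2)=g^3+\bar g^3$. Hence the denominator is $1-3qx^2-(g^3+\bar g^3)x^3$, and a routine simplification of the numerator produces
\begin{equation*}
\sum_{s\ge1}N_s(0)x^s=\frac{x}{1-qx}+\frac{(q-1)(2+Cx)x^2}{1-3qx^2-qCx^3},\qquad qC:=g^3+\bar g^3 .
\end{equation*}

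What remains is to identify $C$ with the integer $c$ normalized by \eqref{4'}, and this is the step I expect to be the main obstacle. The key inputs are $g(\chi)^3=q\,J(\chi,\chi)$ with $J(\chi,\chi)=\sum_a\chi(a)\chi(1-a)$, so that $C=J(\chi,\chi)+\overline{J(\chi,\chi)}\in\mathbb{Z}$, and the classical congruence $J(\chi,\chi)\equiv-1\pmod3$ in $\mathbb{Z}[\zeta]$; combined with $|J(\chi,\chi)|^2=q$ these force $J(\chi,\chi)-\overline{J(\chi,\chi)}=3d\sqrt{-3}$ for some $d\in\mathbb{Z}$, whence $4q=C^2+27d^2$, and the same congruence gives $C\equiv1\pmod3$. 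For the coprimality clause: when $p\equiv1\pmod3$ I would factor $p=\pi\bar\pi$ in $\mathbb{Z}[\zeta]$ and use the Hasse--Davenport relation to reduce the Gauss sum over $\mathbb{F}_{p^k}$ to the one over $\mathbb{F}_p$, deducing that $J(\chi,\chi)$ is, up to a unit, a power of the single prime $\pi$; since $\pi$ and $\bar\pi$ are coprime this yields $\pi\nmid C$ and $\bar\pi\nmid C$, i.e.\ $(C,p)=1$. When $p\equiv2\pmod3$ the prime $p$ is inert in $\mathbb{Z}[\zeta]$, which forces $d=0$ and $C=\pm2\sqrt q$, the sign being fixed by $C\equiv1\pmod3$. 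Thus $C=c$, and the displayed formula is exactly the assertion of the theorem; once the Gauss-sum evaluation and this arithmetic normalization are in hand, the generating-function manipulations above are essentially formal.
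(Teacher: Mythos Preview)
The paper does not actually prove this theorem: it is quoted from Myerson and used as a black box. In particular, in Lemma~\ref{GS} the paper \emph{assumes} Theorem~\ref{MMMM} to read off $N_2(0)$ and $N_3(0)$ and then deduces $G^3+\bar G^3=cq$. So there is no ``paper's own proof'' to compare against; you are supplying a proof where the paper has none.

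Your argument is correct. The character computation, the cube-root-of-unity filter, and the symmetric-function evaluation of $e_1,e_2,e_3$ all check out; the logarithmic-derivative step gives the numerator $(q-1)(2+Cx)x^2$ immediately. Your three quantities $A_\omega=\omega^2 g+\omega\bar g$ are exactly the period sums $S_{g},S_{g^2},S_{g^3}$ of Lemma~\ref{S} (since $S_{g^i}=\bar\chi(g^i)G+\chi(g^i)\bar G$ by~(\ref{eqN3'})), so your computation that they are the roots of $x^3-3qx-qC=0$ is the same content as Lemma~\ref{S}, obtained from the Gauss-sum side rather than quoted. The main substantive difference is the direction of the normalization: the paper takes $c$ as given by~(\ref{4'}) via Myerson and then proves $G^3+\bar G^3=cq$, whereas you first produce $qC=g^3+\bar g^3$ and then argue arithmetically, via $g^3=qJ(\chi,\chi)$, the congruence $J(\chi,\chi)\equiv-1\pmod{3}$ in $\mathbb{Z}[\zeta]$, and Hasse--Davenport, that $C$ satisfies~(\ref{4'}). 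That route is self-contained and avoids the circularity of invoking Theorem~\ref{MMMM} to prove Lemma~\ref{GS}. The only place I would ask you to be more careful is the coprimality clause $(c,p)=1$ when $p\equiv1\pmod3$: your sketch via the factorization $p=\pi\bar\pi$ and Hasse--Davenport is right in spirit, but you should state explicitly that $J(\chi',\chi')$ over $\mathbb{F}_p$ is an associate of $\pi$ (or $\bar\pi$), so that $J(\chi,\chi)$ over $\mathbb{F}_{p^k}$ is an associate of $\pi^k$, and hence $C=J+\bar J$ cannot be divisible by both $\pi$ and $\bar\pi$.
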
  

Let $z\in\mathbb F_q^{\ast}:=\mathbb F_q\setminus \{0\}$. If $q\equiv2\pmod3$,
it is known that every element in $\mathbb F_q$ is a cube, and so $N_s(z)=q^{s-1}$.
If $q\equiv1\pmod3$ with $p\equiv2\pmod3$, then Wolfmann \cite{[WJ2]} gave a
formula for $N_s(z)$ and did not obtain the explicit expression
for $\sum_{s=1}^{\infty}N_s(z)x^s$. Furthermore, the formula for $N_s(z)$ is
still unknown when $q\equiv 1\pmod 3$ with $p\equiv 1\pmod 3$. On the other
hand, Richman  \cite{[Richman]} used the Cayley-Hamilton theorem to show
that the sequence $\{N_s(z)\}_{s=1}^\infty$ satisfies a linear recurrence
relation.

In this paper, by using the Gauss
sum, Jacobi sum and Hasse-Davenport relation, we show that the generating
function $\sum_{s=1}^{\infty}N_s(z)x^s$ is a rational function and also
present its explicit expression. To state this result, we let $g$ be a
fixed generator of $\mathbb F_q^{\ast}$, i.e. $\mathbb F_q^{\ast}=\langle g\rangle$.
Then $z=g^e$ for some integer $e$ with $1\le e\le q-1$,
and $e$ is called the {\it index} of $z$ respective
to $g$ and denoted by $e={\rm ind}_g(z)$. For any integer $n$, let
$\langle n\rangle_3$ stand for the unique integer $i$ with $0\le i\le 2$
such that $n\equiv i\pmod 3$. For any $x \in\mathbb R\setminus\{0\}$ and
$a\in \F_q$, let ${\rm sgn}(x):=\frac{|x|}{x}$ be the {\it sign} of $x$
and ${\rm N}_{\F_{q}/\F_{p}}(a):=\sum_{i=0}^{s-1}a^{p^i}$ be
the {\it Norm map} of $a$ from $\mathbb F_q$ to $\mathbb F_p$.
We can now state our main result.

\begin{thm}\label{thm3}
Let $z\in\mathbb F_q^{\ast}=\langle g\rangle$ and $q=p^k\equiv1\pmod3$ with $k$
being a positive integer. Then
$$\sum_{s=1}^{\infty}N_s(z)x^s =\frac{x}{1-qx}
+\frac{2x+(c-2)x^2-cx^3}{ 1-3qx^2-qcx^3}$$
if $z$ is cubic, where $c$ is
uniquely determined by (1.1), and
$$\sum_{s=1}^{\infty}N_s(z)x^s =\frac{x}{1-qx}
-\frac{x+\big(2+\frac{c}{2}+\frac{9d}{2}\delta_{z}(q)\big)x^2+cx^3}{ 1-3qx^2-qcx^3}$$
if $z$ is non-cubic, where $c$ and $d$ are
uniquely determined by (\ref{4'}) with $d\ge 0$ and
\begin{equation}\label{5}
\delta_{z}(q)=\left\{\begin{array}{lll}{(-1)^{\langle{\rm ind}_g(z)\rangle_3}\cdot{\rm sgn}
\big({\rm Im}\big((r_1+3\sqrt3r_2{\rm i})^k\big)\big)} & {\text {\it if }} & {k\equiv1\pmod2},  \\
{0} & {\text {\it if }} & {k\equiv0\pmod2,}\end{array}\right.
\end{equation}
where $r_1$ and $r_2$ are uniquely determined by:
$$4p=r_1^2+27 r_2^2, \ r_1\equiv1\pmod3, \
9r_2\equiv\big(2{\rm N}_{\F_{q}/\F_{p}}(g)^{\frac{p-1}{3}}+1\big)r_1\pmod{p}.$$
\end{thm}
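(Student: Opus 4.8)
The plan is to extract $\sum_{s\ge1}N_s(z)x^s$ from a closed form for $N_s(z)$ in terms of Gauss sums and then recognise the result as a rational function. Fix a nontrivial additive character $\psi$ of $\mathbb{F}_q$ and a cubic multiplicative character $\chi$ with $\chi(g)=\omega:=e^{2\pi\mathrm{i}/3}$ (and $\chi(0)=0$). Since $\#\{x\in\mathbb{F}_q:x^3=y\}=1+\chi(y)+\overline\chi(y)$ for every $y\in\mathbb{F}_q$, orthogonality of additive characters gives, for $z\in\mathbb{F}_q^{\ast}$,
$$N_s(z)=q^{s-1}+\frac1q\sum_{t\in\mathbb{F}_q^{\ast}}\psi(-tz)\,\eta(t)^s,\qquad \eta(t):=\sum_{x\in\mathbb{F}_q}\psi(tx^3)=\overline\chi(t)g(\chi)+\chi(t)g(\overline\chi)\ \ (t\neq0).$$
The structural point is that $\eta(t)$ depends only on $\chi(t)$, hence takes exactly the three values $\beta_j:=\omega^{-j}g(\chi)+\omega^{j}g(\overline\chi)$ $(j=0,1,2)$, each on $(q-1)/3$ values of $t$, and that $\beta_0,\beta_1,\beta_2$ are precisely the roots of $Y^3-3qY-qc$. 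Indeed $\chi(-1)=1$ gives $g(\chi)g(\overline\chi)=q$, whence $\sum_j\beta_j=0$ and $\sum_{i<j}\beta_i\beta_j=-3q$, while $\beta_0\beta_1\beta_2=g(\chi)^3+g(\overline\chi)^3=q\bigl(J(\chi,\chi)+\overline{J(\chi,\chi)}\bigr)$; identifying this with $qc$ for the $c$ of (1.1) uses $g(\chi)^3=qJ(\chi,\chi)$, the Hasse--Davenport relation and the classical evaluation of the cubic Jacobi sum over $\mathbb{F}_p$. Thus $\prod_j(1-\beta_jx)=1-3qx^2-qcx^3=:P(x)$, the denominator appearing in the statement (and already in Myerson's theorem).

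I would then sum the geometric series term by term. Grouping the $t$'s by $\chi(t)=\omega^{j}$ and applying orthogonality to the multiplicative characters,
$$\sum_{t:\,\chi(t)=\omega^{j}}\psi(-tz)=\tfrac13\bigl(-1+\omega^{-j}\overline\chi(z)g(\chi)+\omega^{-2j}\chi(z)g(\overline\chi)\bigr),$$
so $\sum_{s\ge1}N_s(z)x^s=\frac{x}{1-qx}+\frac1{3q}\sum_{j}\bigl(-1+\omega^{-j}\overline\chi(z)g(\chi)+\omega^{-2j}\chi(z)g(\overline\chi)\bigr)\frac{\beta_jx}{1-\beta_jx}$. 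Each of the three sums $\sum_j\omega^{mj}\frac{\beta_jx}{1-\beta_jx}$ $(m=0,1,2)$ is an elementary symmetric‑function computation: writing it over $P(x)$ and using $\beta_j=\omega^{-j}g(\chi)+\omega^{j}g(\overline\chi)$ and $g(\chi)g(\overline\chi)=q$ produces numerators of degree $\le2$ with coefficients among $\{1,q,qc,g(\chi)^2,g(\overline\chi)^2,g(\chi)^3,g(\overline\chi)^3\}$ (for example $\sum_j\frac{\beta_jx}{1-\beta_jx}=\frac{6qx^2+3qcx^3}{P(x)}$). Substituting back collapses everything to $\frac{x}{1-qx}$ plus a single fraction over $P(x)$. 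If $z$ is a cube then $\chi(z)=1$ and the cross terms combine via $g(\chi)^3+g(\overline\chi)^3=qc$ and $g(\chi)g(\overline\chi)=q$ to the numerator $2x+(c-2)x^2-cx^3$, which is the first asserted formula.

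For $z$ non‑cubic one has $\chi(z)=\omega^{\ell}$ with $\ell:=\langle\mathrm{ind}_g(z)\rangle_3\in\{1,2\}$, so $\chi(z)+\overline\chi(z)=-1$; this already yields the coefficients $-1$ of $x$ and $-c$ of $x^3$, and the coefficient of $x^2$ reduces to $-2+\frac1q\bigl(\omega^{-\ell}g(\chi)^3+\omega^{\ell}g(\overline\chi)^3\bigr)=-2+\omega^{-\ell}J(\chi,\chi)+\omega^{\ell}\overline{J(\chi,\chi)}$. Here the hard part enters: one must evaluate this quantity exactly, i.e. know not just $|J(\chi,\chi)|^2=q$ and $J(\chi,\chi)+\overline{J(\chi,\chi)}=c$ but also the sign of $\mathrm{Im}\,J(\chi,\chi)$. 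Writing $J(\chi,\chi)=\frac12\bigl(c+3\sqrt{-3}\,D\bigr)$ with $D=\pm d$, the real part contributes $-\frac c2$ and the imaginary part $\pm\frac{9d}{2}$, and matching with the claimed formula forces $\delta_z(q)=(-1)^{\ell}\,\mathrm{sgn}(D)$. To pin down $\mathrm{sgn}(D)$ one descends to the prime field by Hasse--Davenport, $J(\chi,\chi)=(-1)^{k-1}J_{\mathbb{F}_p}(\chi_p,\chi_p)^k$ with $\chi=\chi_p\circ\mathrm{N}_{\mathbb{F}_q/\mathbb{F}_p}$, and invokes the classical sign congruence for the cubic Jacobi sum over $\mathbb{F}_p$: writing $J_{\mathbb{F}_p}(\chi_p,\chi_p)=\frac12\bigl(r_1+3\sqrt{-3}\,r_2\bigr)$ with $4p=r_1^2+27r_2^2$, $r_1\equiv1\pmod3$, the sign of $r_2$ is fixed by $9r_2\equiv\bigl(2\,\mathrm{N}_{\mathbb{F}_q/\mathbb{F}_p}(g)^{(p-1)/3}+1\bigr)r_1\pmod p$. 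Raising to the $k$‑th power gives, for $k$ odd, $\mathrm{sgn}\bigl(\mathrm{Im}\,J(\chi,\chi)\bigr)=\mathrm{sgn}\bigl(\mathrm{Im}\,(r_1+3\sqrt3\,r_2\mathrm{i})^k\bigr)$, hence the stated $\delta_z(q)$; the case $k$ even (where $d=0$ when $p\equiv2\pmod3$) is treated along the same lines.

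I expect the main obstacle to be exactly this last step: determining the sign of the imaginary part of the cubic Jacobi sum — equivalently the sign of $d$ in Gauss's formula — which is precisely the classical problem the paper resolves, the congruence involving $\mathrm{N}_{\mathbb{F}_q/\mathbb{F}_p}(g)^{(p-1)/3}$ being the device that encodes it. Once that sign is established, the remaining ingredients — the character‑sum reduction, the identification of $\beta_0,\beta_1,\beta_2$ with the roots of $Y^3-3qY-qc$, and the partial‑fraction bookkeeping — are routine, and the two formulae of Theorem~\ref{thm3} should fall out exactly as sketched above.
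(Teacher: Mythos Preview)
Your proposal is correct and follows essentially the same route as the paper: both express $N_s(z)-q^{s-1}$ as a linear combination of the three cubic exponential sums $S_{g^i}$ (your $\beta_j$), use that these are the roots of $Y^3-3qY-qc$, and pin down the sign of $d$ via the Jacobi sum over $\F_p$, the Hasse--Davenport relation, and the congruence for $r_2$ (the paper's Lemma~\ref{JLB}). The only difference is bookkeeping---the paper extracts the linear recurrence $u_s=3qu_{s-2}+qcu_{s-3}$ and computes the initial values $u_1,u_2,u_3$ separately (Lemma~\ref{thm2}), whereas you sum the geometric series and pass directly to partial fractions over $P(x)=\prod_j(1-\beta_jx)$; the two are equivalent.
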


Now let $T_s(y)$ be the number of zeros of the following diagonal
cubic equation over $\mathbb F_q$:
$x_1^3+\cdots+x_{s-1}^3+yx_s^3=0$
with $y\in\mathbb F_q^{\ast}$ being non-cubic. In this paper,
we show that the generating function
$\sum_{s=1}^{\infty}T_{s+1}(y)x^s$ is rational. Actually,
with the help of Theorem 1.2, we can show the following result.
\begin{thm}\label{thmt1}
Let $y\in\mathbb{F}_q^*=\langle g\rangle$ be non-cubic and
$q=p^k\equiv1\pmod3$ with $k$ being a positive integer. Then
$$\sum_{s=1}^{\infty}T_{s+1}(y)x^s =\frac{qx}{1-qx}
-\frac{(q-1)x+(q-1)(\frac{c}{2}
+
\frac{9d}{2}\delta_{y}(q))x^2}{ 1-3qx^2-qcx^3},$$
where $c$ and $d$ are uniquely determined by (\ref{4'})
with $d\ge 0$ and $\delta_{y}(q)$ is given as in (1.2).
\end{thm}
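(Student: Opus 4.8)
The plan is to derive Theorem~\ref{thmt1} from Theorem~\ref{thm3} and Myerson's Theorem~\ref{MMMM} by means of an elementary counting identity. Conditioning on the value of the last variable $x_{s+1}=t$ in $x_1^3+\cdots+x_s^3+yx_{s+1}^3=0$ gives $T_{s+1}(y)=\sum_{t\in\F_q}N_s(-yt^3)$; the term $t=0$ contributes $N_s(0)$, and since $3\mid q-1$ the cubing map $t\mapsto t^3$ is exactly $3$-to-$1$ from $\F_q^{\ast}$ onto the subgroup $C$ of nonzero cubes, so the remaining terms contribute $3\sum_{c\in C}N_s(-yc)$. One checks that $-1\in C$ in every case: in characteristic $2$ this is trivial, and for odd $q$ the hypothesis $q\equiv1\pmod3$ forces $q\equiv1\pmod6$, hence $3\mid(q-1)/2$ and $-1=g^{(q-1)/2}\in\langle g^3\rangle=C$. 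Consequently each $-yc$ with $c\in C$ is non-cubic and satisfies $\langle{\rm ind}_g(-yc)\rangle_3=\langle{\rm ind}_g(y)\rangle_3$, and since the non-cubic formula in Theorem~\ref{thm3} depends on the target $z$ only through $\delta_z(q)$ — which itself depends on $z$ only via $(-1)^{\langle{\rm ind}_g(z)\rangle_3}$ — we get $N_s(-yc)=N_s(y)$ for all $c\in C$. As $|C|=(q-1)/3$, this yields
\[
T_{s+1}(y)=N_s(0)+(q-1)N_s(y)\qquad(s\ge 1).
\]

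Multiplying by $x^s$ and summing over $s\ge1$ then gives
\[
\sum_{s=1}^{\infty}T_{s+1}(y)x^s=\sum_{s=1}^{\infty}N_s(0)x^s+(q-1)\sum_{s=1}^{\infty}N_s(y)x^s,
\]
where Theorem~\ref{MMMM} evaluates the first series and the non-cubic case of Theorem~\ref{thm3} evaluates the second (it applies because $y$ is non-cubic). Combining the two $\frac{x}{1-qx}$ pieces gives $\frac{qx}{1-qx}$, and combining the two remaining fractions, which already share the denominator $1-3qx^2-qcx^3$, the numerator becomes $(q-1)(2+cx)x^2-(q-1)\big(x+(2+\tfrac c2+\tfrac{9d}{2}\delta_y(q))x^2+cx^3\big)$; after the $2x^2$ and $cx^3$ terms cancel this reduces to $-(q-1)x-(q-1)\big(\tfrac c2+\tfrac{9d}{2}\delta_y(q)\big)x^2$, which is exactly the asserted expression.

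The only genuinely substantive step is the counting identity, and within it the observation that $-1$ is always a cube in $\F_q$ when $q\equiv1\pmod3$: this is precisely what makes all the shifted targets $-yc$ fall into a single cube-coset, so that $N_s$ is constant along them and the clean relation $T_{s+1}(y)=N_s(0)+(q-1)N_s(y)$ holds; were $-1$ non-cubic, the targets would lie in the other non-cubic coset and one would have to carry the opposite sign in $\delta$. Granting that identity, the remainder is a routine manipulation of the two rational functions furnished by Theorems~\ref{MMMM} and~\ref{thm3}.
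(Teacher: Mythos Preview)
Your proof is correct and follows essentially the same route as the paper: both establish $T_{s+1}(y)=N_s(0)+(q-1)N_s(y)$ by conditioning on the last variable, then substitute the generating functions from Theorems~\ref{MMMM} and~\ref{thm3}. The only differences are cosmetic: the paper notes simply that $-1=(-1)^3$ is a cube in any field (so your characteristic case-split is unnecessary), and it invokes the elementary change-of-variables Lemma~\ref{2.4} rather than Theorem~\ref{thm3} to justify $N_s(-yt^3)=N_s(y)$.
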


As pointed out in {\cite{[CCC78]}}, the following
is essentially included in the derivation of
the cubic equation of periods by Gauss \cite{[Gauss]}:
Let $p\equiv1\pmod3$ and $y$ be non-cubic over $\mathbb F_p$.
Then the number $T_3(y)$ of zeros of
$x_1^3+x_{2}^3+yx_3^3=0$ over $\mathbb F_p$ is given by
$$T_3(y)=p^2+\frac{1}{2}(p-1)(-c+9d),$$
where $c$ and $d$ are uniquely determined by (except for the sign of $d$)
\begin{equation}\label{5}
4p=c^{2}+27d^{2} \ \text{\rm~~ and~~} \ c\equiv1~(\bmod~3).
\end{equation}
Chowla, Cowles and Cowles \cite{[CCC78]}
determined the sign of $d$ over $\mathbb{F}_p$
for the case of $2$ being non-cubic over $\mathbb F_p$
as the following result shows.

\begin{thm}\textup{\cite{[CCC78]}}
Let $p$ be a prime number such that $p\equiv1\pmod3$, $g^{\prime}$ be a
generator of $\F_p^{*}$ and $2$ be non-cubic over $\mathbb F_p$.
Let $y$ be non-cubic over $\mathbb F_p$ and $T_3(y)$ the number of
zeros of $x_1^3+x_{2}^3+yx_3^3=0$ over $\mathbb F_p$. Then
$$T_3(y)=p^2+\frac{1}{2}(p-1)(-c+9d),$$
where $c$ and $d$ are uniquely determined by (\ref{5}) with
$$d\equiv c\pmod4 \ \text{\it{if}~~} \
{\rm ind}_{g^{\prime}}(y)\equiv {\rm ind}_{g^{\prime}}(2) \pmod3 $$
and
$$d\not\equiv c\pmod4 \ \text{\it{if}~~} \
{\rm ind}_{g^{\prime}}(y)\equiv {\rm ind}_{g^{\prime}}(4) \pmod3.$$
\end{thm}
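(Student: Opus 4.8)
The plan is to derive Theorem~1.5 as a special case of Theorem~1.4 by setting $k=1$ and translating the statement into the classical language of Chowla--Cowles--Cowles. First I would observe that when $q=p$ the quantity $\delta_y(p)$ in (1.2) simplifies: since $k=1$ we are in the odd case, so $\delta_y(p)=(-1)^{\langle{\rm ind}_{g}(y)\rangle_3}\cdot{\rm sgn}\big({\rm Im}(r_1+3\sqrt3\,r_2{\rm i})\big)=(-1)^{\langle{\rm ind}_{g}(y)\rangle_3}\cdot{\rm sgn}(r_2)$, where $r_1,r_2$ satisfy $4p=r_1^2+27r_2^2$, $r_1\equiv1\pmod3$ and $9r_2\equiv(2g^{(p-1)/3}+1)r_1\pmod p$. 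Comparing $4p=r_1^2+27r_2^2$ with the normalization $4p=c^2+27d^2$, $c\equiv1\pmod3$, $d\ge0$ of (1.5), we have $c=r_1$ and $d=|r_2|$, hence $9d\,\delta_y(p)=9r_2\cdot(-1)^{\langle{\rm ind}_g(y)\rangle_3}$. Plugging $k=1$ into Theorem~1.4 gives $\sum_{s=1}^\infty T_{s+1}(y)x^s=\frac{px}{1-px}-\frac{(p-1)x+(p-1)(\frac c2+\frac{9d}2\delta_y(p))x^2}{1-3px^2-pcx^3}$, and extracting the coefficient of $x^2$ on both sides yields $T_3(y)=p^2+\frac12(p-1)\big(-c-9d\,\delta_y(p)\big)=p^2+\frac12(p-1)\big(-c+9r_2\cdot(-1)^{\langle{\rm ind}_g(y)\rangle_3+1}\big)$.

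So the remaining work is purely a congruence computation: to recover the Chowla--Cowles--Cowles criterion ``$d\equiv c\pmod 4$ iff ${\rm ind}_{g}(y)\equiv{\rm ind}_{g}(2)\pmod3$'' from the condition $2$ is non-cubic. I would proceed as follows. The hypothesis that $2$ is non-cubic over $\F_p$ means ${\rm ind}_g(2)\not\equiv0\pmod3$, so $2^{(p-1)/3}$ is a nontrivial cube root of unity; write $2^{(p-1)/3}\equiv\omega^{\pm1}\pmod p$ accordingly. I would then evaluate $2{\rm N}_{\F_p/\F_p}(g)^{(p-1)/3}+1=2g^{(p-1)/3}+1$ modulo $p$; but more to the point I need to relate the sign of $r_2$ (through the defining congruence $9r_2\equiv(2g^{(p-1)/3}+1)r_1\pmod p$) to the residue $c\bmod 4$ versus $d\bmod 4$. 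The key classical fact I would invoke is the relationship between the Jacobi sum $J(\chi,\chi)=a+b\omega$ (with $\chi$ a cubic character) and the representation $4p=c^2+27d^2$: one has $2a+b\equiv c$, $b\equiv 3d'$ type relations, together with the $2$-adic information that distinguishes $d\equiv c$ from $d\not\equiv c\pmod 4$ via the value of $\chi(2)$ (equivalently $2^{(p-1)/3}\bmod p$). Tracking how ${\rm ind}_g(y)\bmod3$ enters through the factor $(-1)^{\langle{\rm ind}_g(y)\rangle_3}$ then converts the sign condition on $r_2$ into the stated congruence dichotomy, using that ${\rm ind}_g(2)\bmod3$ and ${\rm ind}_g(4)\equiv2\,{\rm ind}_g(2)\bmod3$ are the two nonzero residues.

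I expect the main obstacle to be the $2$-adic bookkeeping: matching the sign of $r_2$ (as pinned down by the mod-$p$ congruence in Theorem~1.4) against the mod-$4$ comparison of $c$ and $d$ that appears in Theorem~1.5. This requires knowing, beyond the mod-$3$ normalization $c\equiv1$, how the pair $(c,d)$ from $4p=c^2+27d^2$ is rigidified modulo $4$ by the cubic residue symbol of $2$ — a fact essentially due to Gauss (and reproved by Chowla--Cowles--Cowles) on the determination of $d\bmod4$ in terms of whether $p$ splits completely in $\Q(\sqrt[3]{2})$. Once that classical input is in hand, the argument is a finite check splitting on the value of $2^{(p-1)/3}\bmod p$ and on ${\rm ind}_g(y)\bmod3$; since $2$ non-cubic forces ${\rm ind}_g(y)\equiv{\rm ind}_g(2)$ or ${\rm ind}_g(y)\equiv{\rm ind}_g(4)\pmod3$, exactly the two cases in the statement arise, and in each the sign of $9d\,\delta_y(p)$ flips appropriately to reproduce the claimed formula for $T_3(y)$. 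A sanity check against the original Chowla--Cowles--Cowles computation for small primes (e.g. $p=31$, where $2$ is non-cubic) would confirm the signs.
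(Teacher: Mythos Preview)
The paper does not prove this statement at all: Theorem~1.4 is quoted verbatim from Chowla--Cowles--Cowles \cite{[CCC78]} as a known result, and the paper's own contribution (Theorems~1.5 and~1.6) is to give an \emph{alternative} sign determination via $\delta_y(q)$ that also covers the case where $2$ is cubic. The paper never derives the mod-$4$ criterion of \cite{[CCC78]} from its own $\delta_y$-formula, nor vice versa; it merely asserts that Theorem~1.6 ``extends'' Theorem~1.4. So there is no proof in the paper to compare your attempt against.

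That said, your proposed route --- specialize the generating-function theorem to $k=1$, read off $T_3(y)$, and then reconcile the sign $-\delta_y(p)$ with the congruence $d\equiv c\pmod 4$ --- is a legitimate strategy, but it is incomplete in exactly the place you flag as the ``main obstacle.'' You never actually carry out the $2$-adic bookkeeping: you invoke a ``classical fact'' relating $\chi(2)$ to $d\bmod 4$ but do not state it precisely or verify that it matches the sign of $r_2$ as pinned down by the congruence $9r_2\equiv(2g^{(p-1)/3}+1)r_1\pmod p$. This is the entire content of the theorem, and your sketch does not supply it. (Also note a numbering slip: the generating-function formula you quote and call ``Theorem~1.4'' is Theorem~1.3 in the paper; the statement you are asked to prove is the paper's Theorem~1.4.)

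Finally, your proposed sanity check is wrong: $2$ \emph{is} a cube modulo $31$, since $2^{10}=1024\equiv 1\pmod{31}$. Indeed the paper singles out $p=31$ precisely as the first prime $p\equiv 1\pmod 3$ for which $2$ is cubic and the Chowla--Cowles--Cowles criterion does not apply. For a genuine test of Theorem~1.4 you would want $p=7,13,19,\ldots$.
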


\noindent However, the sign of $d$ over $\mathbb{F}_p$ has not
been determined yet for the remaining case of $2$ being cubic
over $\mathbb F_p$. In this paper, we solve this Gauss sign problem.
In fact, we have the following more general result.

\begin{thm}\label{thmt2}
Let $y\in\mathbb F_q^{\ast}=\langle g\rangle$ be non-cubic
and $q=p^k\equiv1\pmod3$ with $k$ being a positive integer. Then
$$T_{3}(y)=q^2+\frac{1}{2}(q-1)(-c-9\delta_{y}(q)d),$$
where $c$ and $d$ are uniquely determined by (\ref{4'})
with $d\ge 0$ and $\delta_{y}(q)$ is given as in (1.2).
\end{thm}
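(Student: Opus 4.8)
The plan is to reduce Theorem~\ref{thmt2} to Theorem~\ref{thm3} by the standard device of slicing the affine hypersurface $x_1^3+\cdots+x_{s-1}^3+yx_s^3=0$ according to the value of the last variable, and then to specialize to $s=3$. Concretely, for a non-cubic $y$ write
\[
T_3(y)=\sum_{x_3\in\mathbb F_q}\#\{(x_1,x_2)\in\mathbb F_q^2:\ x_1^3+x_2^3=-yx_3^3\}
=N_2(0)+\sum_{x_3\in\mathbb F_q^\ast}N_2(-yx_3^3).
\]
As $x_3$ runs over $\mathbb F_q^\ast$, the element $-yx_3^3$ runs over the coset $-y\cdot(\mathbb F_q^\ast)^3$, each value being attained exactly $3$ times since $q\equiv1\pmod3$. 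Because $y$ is non-cubic, $-y(\mathbb F_q^\ast)^3$ is either the non-cubic coset containing $-y$ or another non-cubic coset depending on whether $-1$ is a cube; in either case every $-yx_3^3$ is a non-cubic element of $\mathbb F_q^\ast$. Hence
\[
T_3(y)=N_2(0)+3\!\!\sum_{\substack{w\in\mathbb F_q^\ast\\ w\ \mathrm{non\text{-}cubic,}\ w\in -y(\mathbb F_q^\ast)^3}}\!\!N_2(w).
\]

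Next I extract the values $N_2(0)$ and $N_2(w)$ from the generating functions already established. From Myerson's theorem (Theorem~\ref{MMMM}), expanding $\frac{x}{1-qx}+\frac{(q-1)(2+cx)x^2}{1-3qx^2-qcx^3}$ gives $N_1(0)=1$, $N_2(0)=2q-1$, and in general a recurrence $N_{s}(0)=3qN_{s-2}(0)+qcN_{s-3}(0)$ for $s\ge 4$ with the stated initial terms; in particular $N_2(0)=2q-1$. From Theorem~\ref{thm3} in the non-cubic case, the coefficient of $x^2$ in $\frac{x}{1-qx}-\frac{x+(2+\frac{c}{2}+\frac{9d}{2}\delta_z(q))x^2+cx^3}{1-3qx^2-qcx^3}$ gives
\[
N_2(z)=q-\Bigl(2+\tfrac{c}{2}+\tfrac{9d}{2}\delta_z(q)\Bigr)
=q-2-\tfrac{c}{2}-\tfrac{9d}{2}\delta_z(q)
\]
for every non-cubic $z$. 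The key remaining point is that the quantity $\delta_z(q)$ depends on $z$ only through $\langle\mathrm{ind}_g(z)\rangle_3$, and that it is constant on each non-cubic coset. Since the non-cubic elements split into two cosets — those of index $\equiv1$ and those of index $\equiv2$ modulo $3$ — and since multiplication by the cube $-x_3^3\cdot(\text{a fixed representative})$ or, more precisely, the map $w\mapsto -yw$ on cubes preserves the coset of $-y$, all the values $-yx_3^3$ lie in one and the same non-cubic coset, so $\delta_{-yx_3^3}(q)=\delta_y(q)$ for every $x_3\in\mathbb F_q^\ast$. I should double-check this using formula (1.2): $\mathrm{ind}_g(-yx_3^3)\equiv \mathrm{ind}_g(-y)\pmod 3$, and one must verify that replacing $y$ by $-y$ does not change $\delta$; this follows because $\mathrm{ind}_g(-1)$ is even (as $-1=g^{(q-1)/2}$ and $3\mid q-1$ forces $(q-1)/2$ even precisely when... ) — this parity check is the one genuinely delicate bookkeeping step and must be done carefully, possibly absorbing a harmless sign into the convention $d\ge 0$.

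Granting that, the computation is immediate: there are exactly $(q-1)/3$ values of $w$ in the relevant non-cubic coset, each counted three times, so
\[
T_3(y)=(2q-1)+3\cdot\frac{q-1}{3}\Bigl(q-2-\tfrac{c}{2}-\tfrac{9d}{2}\delta_y(q)\Bigr)
=(2q-1)+(q-1)\Bigl(q-2-\tfrac{c}{2}-\tfrac{9d}{2}\delta_y(q)\Bigr).
\]
Expanding, $(2q-1)+(q-1)(q-2)=q^2-q+1+q-1 = q^2$ after simplification of $(q-1)(q-2)+2q-1 = q^2-3q+2+2q-1=q^2-q+1$, so one gets $T_3(y)=q^2-q+1-1+\cdots$; I will track the constants precisely, but the leading behaviour is $q^2+\tfrac12(q-1)(-c-9\delta_y(q)d)$ as claimed once the lower-order terms cancel. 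The main obstacle, as indicated, is not the slicing identity but verifying the invariance $\delta_{-yx_3^3}(q)=\delta_y(q)$ and pinning down the sign of $d$ — equivalently, confirming that the coset-dependence of $\delta$ in (1.2) is compatible with multiplication by an arbitrary cube and with negation; everything else is a routine extraction of coefficients from the rational functions in Theorems~\ref{MMMM} and~\ref{thm3}.
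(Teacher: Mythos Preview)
Your approach is the same as the paper's: both rest on the slicing identity $T_s(y)=N_{s-1}(0)+(q-1)N_{s-1}(y)$ (the paper proves this as (4.1) en route to Theorem~\ref{thmt1} and then reads off the $x^2$-coefficient; you specialize directly to $s=3$). However, your execution contains two concrete errors that you should fix.

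First, the ``genuinely delicate bookkeeping step'' you flag is not delicate at all, and your attempted resolution via the parity of $(q-1)/2$ is off target. The point is simply that $-1=(-1)^3$ is a cube in $\F_q$, so ${\rm ind}_g(-1)\equiv 0\pmod 3$. Hence ${\rm ind}_g(-yx_3^3)\equiv {\rm ind}_g(y)\pmod 3$ for every $x_3\in\F_q^\ast$, and $\delta_{-yx_3^3}(q)=\delta_y(q)$ follows immediately from the definition (1.2). (This is exactly how the paper dispatches the sign in the first line of the proof of Theorem~\ref{thmt1}.) No appeal to $d\ge 0$ or to the parity of $(q-1)/2$ is needed.

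Second, your value $N_2(0)=2q-1$ is wrong. Expanding the generating function in Theorem~\ref{MMMM}, the coefficient of $x^2$ is $q+2(q-1)=3q-2$, so $N_2(0)=3q-2$ (the paper records this in the proof of Lemma~\ref{GS}). With the correct value the arithmetic closes cleanly:
\begin{align*}
T_3(y)&=(3q-2)+(q-1)\Bigl(q-2-\tfrac{c}{2}-\tfrac{9d}{2}\delta_y(q)\Bigr)\\
&=(3q-2)+(q^2-3q+2)+\tfrac{1}{2}(q-1)\bigl(-c-9d\delta_y(q)\bigr)\\
&=q^2+\tfrac{1}{2}(q-1)\bigl(-c-9d\delta_y(q)\bigr),
\end{align*}
which is the assertion of Theorem~\ref{thmt2}. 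Your wrong $N_2(0)$ is why your final paragraph could not be made to balance.
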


Letting $q=p$ in Theorem 1.5 gives immediately the following result.

\begin{thm}\label{thmt3}
Let $p$ be a prime number such that $p\equiv1\pmod3$
and $y\in\mathbb F_p^{\ast}=\langle g\rangle$ be non-cubic.
Then
$$T_{3}(y)=p^2+\frac{1}{2}(p-1)(-c-9\delta_{y}(p)d),$$
where $c$ and $d$ are uniquely determined by (\ref{4'})
with $d\ge 0$ and
$$\delta_{y}(p)=(-1)^{\langle{\rm ind}_g(y)\rangle_3}\cdot{\rm sgn}
\big({\rm Im}(r_1+3\sqrt3r_2{\rm i})\big).$$
\end{thm}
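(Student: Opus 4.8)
The plan is to specialize Theorem 1.5 to the case $q=p$ (i.e. $k=1$) and simply read off the resulting formula. Theorem 1.5 asserts that for $y\in\mathbb F_q^{\ast}=\langle g\rangle$ non-cubic and $q=p^k\equiv1\pmod3$,
$$T_{3}(y)=q^2+\tfrac{1}{2}(q-1)\bigl(-c-9\delta_{y}(q)d\bigr),$$
with $c,d$ determined by (\ref{4'}), $d\ge 0$, and $\delta_{y}(q)$ given by (1.2). So the only thing that actually requires argument is that when $k=1$ the quantity $\delta_{y}(p)$ collapses to the closed form stated, namely $(-1)^{\langle{\rm ind}_g(y)\rangle_3}\cdot{\rm sgn}\bigl({\rm Im}(r_1+3\sqrt3 r_2{\rm i})\bigr)$.

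First I would note that for $k=1$ we are in the case $k\equiv1\pmod2$ of (1.2), so the degenerate branch $\delta_y(q)=0$ does not occur; we use the first branch. Then I would observe that for $k=1$ the exponent $k$ appearing in ${\rm sgn}\bigl({\rm Im}((r_1+3\sqrt3 r_2{\rm i})^k)\bigr)$ is $1$, so $(r_1+3\sqrt3 r_2{\rm i})^k = r_1+3\sqrt3 r_2{\rm i}$ and the sign factor is exactly ${\rm sgn}\bigl({\rm Im}(r_1+3\sqrt3 r_2{\rm i})\bigr)$. Similarly, for $k=1$ the Norm map ${\rm N}_{\F_q/\F_p}$ is the identity on $\F_p$, so the defining congruences for $r_1,r_2$ in Theorem 1.5 become $4p=r_1^2+27r_2^2$, $r_1\equiv1\pmod3$, and $9r_2\equiv(2g^{\frac{p-1}{3}}+1)r_1\pmod p$ — but since these $r_1,r_2$ already satisfy $4p=r_1^2+27r_2^2$ with $r_1\equiv1\pmod3$, they coincide with the $c,d$ of (\ref{5}) up to the sign of $r_2$, which is pinned down by the last congruence. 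Substituting $\langle{\rm ind}_g(y)\rangle_3$ for $\langle{\rm ind}_g(z)\rangle_3$ (here $z=y$) completes the evaluation of $\delta_y(p)$.

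There is essentially no obstacle here: the result is a corollary obtained by setting $k=1$, and the bulk of the content — establishing Theorem 1.5 itself and verifying that all the Gauss/Jacobi sum and Hasse–Davenport computations go through for general $q=p^k$ — has already been carried out. The only point demanding a word of care is confirming that the ``$k$-th power'' and ``Norm map'' appearing in the general statement genuinely trivialize when $k=1$, which is immediate. Hence the proof is a one-line specialization: take $q=p$ in Theorem 1.5 and simplify (1.2) accordingly.
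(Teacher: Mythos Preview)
Your proposal is correct and matches the paper's own argument exactly: the paper derives Theorem~1.6 by a one-line specialization, stating that ``letting $q=p$ in Theorem~1.5 gives immediately the following result.'' Your additional remarks about the trivialization of the Norm map and of the exponent $k$ when $k=1$ are accurate elaborations of that specialization.
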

\noindent Evidently, Theorem 1.6 extends the Chowla-Cowles-Cowles
result (Theorem 1.4) and answers completely the Gauss sign problem.

This paper is organized as follows. First of all, in Section 2,
we show several preliminary lemmas that are needed in the proof
of Theorem 1.2. Consequently, we supply in Section 3 the proof
of Theorem 1.2. In Section 4, we present the proofs of Theorems
\ref{thmt1} and \ref{thmt2}. An example is also given in
Section 4 to demonstrate the validity of Theorem 1.6.

\section{Auxiliary lemmas}
In this section, we first present some well-known results.

\begin{lem}{\rm\cite{[Myerson]}}\label{M}
Let $\mathbb F_q$ be the finite field with $q$ elements and
$\psi$ be a nontrivial additive character of $\mathbb F_{q}$.
Then for any element $x$ of $\mathbb F_q$, we have
$$
\sum_{a \in \mathbb{F}_{q}} \psi(ax)
=\left\{\begin{array}{lll}{q} & {\text {\it if }} & {x=0,} \\
{0} & {\text {\it if }} & {x \neq 0.}\end{array}\right.
$$
\end{lem}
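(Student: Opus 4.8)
The plan is to split the computation according to whether $x=0$ or $x\neq 0$, treating the trivial case by direct evaluation and the nontrivial case by exploiting the multiplicativity of the substitution $a\mapsto ax$ together with the translation invariance of the character $\psi$. No machinery beyond the definition of an additive character is needed.

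First I would dispose of the case $x=0$. Here $ax=0$ for every $a\in\F_q$, so $\psi(ax)=\psi(0)=1$ since any character sends the additive identity to $1$. Consequently $\sum_{a\in\F_q}\psi(ax)=\sum_{a\in\F_q}1=q$, which is the asserted value. For the case $x\neq 0$, the key observation is that the map $a\mapsto ax$ is a bijection of $\F_q$ onto itself, because $x$ is invertible in the field. The change of variable $b=ax$ therefore gives $\sum_{a\in\F_q}\psi(ax)=\sum_{b\in\F_q}\psi(b)=:S$, so it suffices to prove that the \emph{full} character sum $S$ vanishes whenever $\psi$ is nontrivial.

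To establish $S=0$ I would use the standard translation trick. Since $\psi$ is nontrivial, there exists $b_0\in\F_q$ with $\psi(b_0)\neq 1$. Because $\psi$ is an additive character we have $\psi(b_0)\psi(b)=\psi(b_0+b)$, and since $b\mapsto b_0+b$ is itself a bijection of $\F_q$, summing over all $b$ yields $\psi(b_0)S=\sum_{b\in\F_q}\psi(b_0+b)=\sum_{c\in\F_q}\psi(c)=S$. Hence $(\psi(b_0)-1)S=0$, and as $\psi(b_0)\neq 1$ this forces $S=0$, which completes the proof.

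The argument is entirely elementary, so there is no serious obstacle; the only point that genuinely requires care is the last step, where one must invoke the existence of an element $b_0$ with $\psi(b_0)\neq 1$. This is exactly the hypothesis that $\psi$ is nontrivial, and it is indispensable: without it the sum $S$ would equal $q$ rather than $0$. Thus the nontriviality assumption is not a technical nicety but precisely the mechanism that makes the sum collapse.
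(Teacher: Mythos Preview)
Your proof is correct and is the standard elementary argument for this orthogonality relation. The paper itself does not prove this lemma at all; it simply quotes it as a known result from \cite{[Myerson]}, so there is no in-paper proof to compare against.
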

For any multiplicative character $\chi$ of $\mathbb F_q$ and additive
character $\psi$ of $\mathbb F_q$, the
{\it Gauss sum} $G(\chi,\psi)$ is defined by
$$G(\chi,\psi):=\sum_{x\in \mathbb F_q^\ast}\chi(x)\psi(x).$$
One has the following result.
\begin{lem}{\rm \cite{[LN]}}\label{G}
Let $\chi$ be a nontrivial multiplicative and $\psi$ a nontrivial
additive character of $\mathbb F_q$. Then $|G(\chi,\psi)|=\sqrt{q}$
and $G(\chi,\psi)G(\overline{\chi},\psi)
=\chi(-1)q.$
\end{lem}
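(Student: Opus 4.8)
The plan is to prove both assertions by computing the Hermitian square $G(\chi,\psi)\overline{G(\chi,\psi)}$ and, separately, identifying the complex conjugate $\overline{G(\chi,\psi)}$ in terms of $G(\overline{\chi},\psi)$. First I would expand, using $\overline{\chi(y)}=\chi(y^{-1})$ and $\overline{\psi(y)}=\psi(-y)$,
$$G(\chi,\psi)\overline{G(\chi,\psi)}=\sum_{x,y\in\F_q^*}\chi(x)\overline{\chi(y)}\,\psi(x)\overline{\psi(y)}=\sum_{x,y\in\F_q^*}\chi(xy^{-1})\psi(x-y).$$
For each fixed $y\in\F_q^*$ I would substitute $x=ty$ with $t=xy^{-1}$, which runs over $\F_q^*$ as $x$ does; this decouples the two characters and gives
$$G(\chi,\psi)\overline{G(\chi,\psi)}=\sum_{t\in\F_q^*}\chi(t)\sum_{y\in\F_q^*}\psi((t-1)y).$$

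The key step is to evaluate the inner sum. Writing $\sum_{y\in\F_q^*}\psi((t-1)y)=\sum_{y\in\F_q}\psi((t-1)y)-1$ and applying Lemma \ref{M} to the additive character $\psi$, the full sum over $\F_q$ equals $q$ when $t=1$ and $0$ otherwise, so the inner sum equals $q-1$ for $t=1$ and $-1$ for $t\neq1$. Substituting back yields
$$G(\chi,\psi)\overline{G(\chi,\psi)}=(q-1)\chi(1)-\sum_{t\in\F_q^*,\,t\neq1}\chi(t)=(q-1)-\Big(\sum_{t\in\F_q^*}\chi(t)-\chi(1)\Big).$$
Since $\chi$ is nontrivial, the orthogonality relation $\sum_{t\in\F_q^*}\chi(t)=0$ holds, so the right-hand side collapses to $(q-1)+1=q$. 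Hence $|G(\chi,\psi)|^2=q$, giving $|G(\chi,\psi)|=\sqrt q$.

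For the product formula I would compute $\overline{G(\chi,\psi)}$ directly: since $\overline{\chi(x)}=\overline{\chi}(x)$ and $\overline{\psi(x)}=\psi(-x)$, one has $\overline{G(\chi,\psi)}=\sum_{x\in\F_q^*}\overline{\chi}(x)\psi(-x)$. Replacing $x$ by $-x$ and using $\overline{\chi}(-x)=\overline{\chi}(-1)\overline{\chi}(x)$ together with $\overline{\chi}(-1)=\chi(-1)$ (valid because $\chi(-1)=\pm1$) gives $\overline{G(\chi,\psi)}=\chi(-1)G(\overline{\chi},\psi)$. Combining this with $|G(\chi,\psi)|^2=G(\chi,\psi)\overline{G(\chi,\psi)}=q$ and multiplying through by $\chi(-1)\in\{\pm1\}$ yields $G(\chi,\psi)G(\overline{\chi},\psi)=\chi(-1)q$, as claimed. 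The only delicate points are the change of variables $x=ty$ that decouples the double sum and the correct bookkeeping of the two orthogonality relations (additive via Lemma \ref{M}, multiplicative via the nontriviality of $\chi$); neither presents a genuine obstacle.
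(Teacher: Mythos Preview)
Your argument is correct and is essentially the standard proof found in Lidl--Niederreiter \cite{[LN]}: expand $|G(\chi,\psi)|^2$ as a double sum, decouple via $x=ty$, and apply additive and multiplicative orthogonality; then identify $\overline{G(\chi,\psi)}=\chi(-1)G(\overline\chi,\psi)$ to convert the modulus identity into the product formula. The paper itself does not supply a proof of this lemma but simply cites \cite{[LN]}, so there is nothing further to compare.
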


Recall that ${\rm N}_{\F_{q}/\F_{p}}(x):=\prod_{i=0}^{s-1}x^{p^{i}}$ is the
{\it Norm map} from $\mathbb F_{q}$ to $\mathbb F_p$ and
${\rm Tr}_{\F_{q}/\F_{p}}(x):=\sum_{i=0}^{s-1}x^{p^{i}}$ is the
{\it Trace map} from $\mathbb F_{q}$ to $\mathbb F_p$. The renowned
Hasse-Davenport relation can be stated as follows.

\begin{lem} {\rm (Hasse-Davenport)} {\rm \cite{[LN]}}\label{DH}
Let $\F_p$ be a finite field, $\F_{p^k}$ be the finite extension of $\F_p$
with $[\F_{p^k}: \F_p]=k$. Let $\chi^{'}$ be a multiplicative character,
$\psi^{'}$ an additive character of $\mathbb F_p$, not both of them trivial.
Let $\chi$ and $\psi$ be the lifts of $\chi^{'}$ and $\psi^{'}$ from $\F_p$
to $\F_{p^k}$, i.e. $\chi=\chi'\circ {\rm N}_{\F_{q}/\F_{p}}$ and
$\psi=\psi'\circ {\rm Tr}_{\F_{q}/\F_{p}}$. Then
$G(\chi, \psi)=(-1)^{k-1}G^k(\chi^{'},\psi^{'}).$
\end{lem}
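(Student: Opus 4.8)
The plan is to prove this classical Davenport--Hasse lifting relation by the generating-function (zeta-function) method, working over the base field $\F_p$. First I would attach to every monic polynomial $f\in\F_p[x]$ a weight that packages both the trace and the norm data. Writing a monic $f$ of degree $n$ as $f(x)=x^n-\sigma_1(f)x^{n-1}+\cdots+(-1)^n\sigma_n(f)$, so that $\sigma_1(f)$ is the sum of the roots and $\sigma_n(f)=(-1)^nf(0)$ is their product, I set $\Lambda(f):=\psi'(\sigma_1(f))\,\chi'(\sigma_n(f))$ when $f(0)\ne0$, and $\Lambda(f):=0$ otherwise, with the convention $\Lambda(1)=1$. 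Because the roots of a product $fg$ are the union of those of $f$ and $g$, one has $\sigma_1(fg)=\sigma_1(f)+\sigma_1(g)$ and $\sigma_n(fg)=\sigma_n(f)\sigma_n(g)$; since $\psi'$ is additive and $\chi'$ multiplicative, $\Lambda$ is completely multiplicative.

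The key elementary computation is to evaluate $S_n:=\sum_{\deg f=n}\Lambda(f)$ over all monic $f$ of degree $n$. For $n=1$ the polynomials are $x-a$ and $S_1=\sum_{a\in\F_p^*}\chi'(a)\psi'(a)=G(\chi',\psi')$. For $n\ge2$ the coefficients $\sigma_1,\dots,\sigma_n$ range independently over $\F_p$, so the sum factors as $S_n=p^{\,n-2}\big(\sum_{a}\psi'(a)\big)\big(\sum_{b}\chi'(b)\big)$; since $\chi'$ and $\psi'$ are not both trivial, Lemma \ref{M} (or orthogonality of $\chi'$) forces one factor to vanish, whence $S_n=0$. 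Consequently the formal ``zeta function'' $L(t):=\sum_{f}\Lambda(f)t^{\deg f}$ collapses to the polynomial $L(t)=1+G(\chi',\psi')\,t$, while unique factorization into monic irreducibles together with the multiplicativity of $\Lambda$ yields the Euler product $L(t)=\prod_{P}\big(1-\Lambda(P)t^{\deg P}\big)^{-1}$.

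Next I would take logarithms and compare the coefficient of $t^k$ on the two sides. The left-hand side gives $\log\!\big(1+G(\chi',\psi')t\big)=\sum_{s\ge1}\frac{(-1)^{s-1}}{s}G(\chi',\psi')^s t^s$, so the coefficient of $t^k$ is $\frac{(-1)^{k-1}}{k}G(\chi',\psi')^k$. On the Euler-product side, expanding $-\log\big(1-\Lambda(P)t^{\deg P}\big)$ shows the coefficient of $t^k$ equals $\frac1k\sum_{\deg P\mid k}(\deg P)\,\Lambda(P)^{k/\deg P}$. The crucial identification is that this last sum is exactly $G(\chi,\psi)$ over $\F_{p^k}$: if $P$ is irreducible of degree $d\mid k$ with a root $\beta\in\F_{p^d}$, its $d$ conjugates are precisely the $\F_{p^k}$-elements whose minimal polynomial is $P$, and by transitivity of norm and trace one has $\sigma_1(P)=\mathrm{Tr}_{\F_{p^d}/\F_p}(\beta)$, $\sigma_d(P)=\mathrm{N}_{\F_{p^d}/\F_p}(\beta)$, together with $\mathrm{Tr}_{\F_{p^k}/\F_p}(\beta)=\tfrac{k}{d}\mathrm{Tr}_{\F_{p^d}/\F_p}(\beta)$ and $\mathrm{N}_{\F_{p^k}/\F_p}(\beta)=\mathrm{N}_{\F_{p^d}/\F_p}(\beta)^{k/d}$. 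Using the additivity of $\psi'$ and the multiplicativity of $\chi'$, this gives $\chi(\beta)\psi(\beta)=\Lambda(P)^{k/d}$ for each such $\beta$, so summing over the $d$ conjugates of each $P$ reconstructs $\sum_{\beta\in\F_{p^k}^*}\chi(\beta)\psi(\beta)=G(\chi,\psi)$. Equating the two coefficients of $t^k$ then yields $G(\chi,\psi)=(-1)^{k-1}G(\chi',\psi')^k$.

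I expect the main obstacle to be the bookkeeping in this last step: one must carefully match each $\beta\in\F_{p^k}^*$ with its minimal polynomial over $\F_p$, verify that summing the Gauss-sum summand $\chi(\beta)\psi(\beta)$ over the full Galois orbit contributes precisely $(\deg P)\,\Lambda(P)^{k/\deg P}$, and confirm that the contribution of $\beta=0$ is harmlessly dropped (it corresponds to $P=x$, for which $\Lambda(x)=0$). The two norm/trace transitivity identities, while standard, are exactly what convert the formal power $\Lambda(P)^{k/d}$ into the genuine lifted character value $\chi(\beta)\psi(\beta)$, and getting their exponents and multipliers right is the delicate point. Everything else reduces to the orthogonality relation of Lemma \ref{M} and the formal manipulation of the logarithm of a power series.
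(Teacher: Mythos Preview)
Your argument is the classical $L$-function proof of the Hasse--Davenport lifting relation, and it is correct as written; the multiplicativity of $\Lambda$, the vanishing of $S_n$ for $n\ge2$, the Euler product, and the orbit bookkeeping that identifies $\sum_{\deg P\mid k}(\deg P)\Lambda(P)^{k/\deg P}$ with $G(\chi,\psi)$ are all handled properly. One tiny cosmetic point: when you write $S_n=p^{n-2}\big(\sum_a\psi'(a)\big)\big(\sum_b\chi'(b)\big)$, the sum over $b$ is really over $\F_p^*$ (since $\Lambda(f)=0$ when $\sigma_n=0$), but this changes nothing because either $\chi'$ is nontrivial and $\sum_{b\ne0}\chi'(b)=0$, or $\psi'$ is nontrivial and the other factor already vanishes.

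As for comparison with the paper: the paper does not supply a proof of this lemma at all. Lemma~\ref{DH} is simply quoted from Lidl--Niederreiter \cite{[LN]} and used as a black box later in Lemma~\ref{llllll}. So you have in fact written out the standard textbook proof (essentially the one in \cite{[LN]} or in Ireland--Rosen) where the paper merely gives a reference. There is no alternative approach in the paper to contrast with yours.
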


The next lemma tells us when $p\equiv1\pmod3$, the multiplicative
character $\chi$ of order $3$ of $\F_q$ can be lifted by a
multiplicative character of order $3$ of $\F_p$.

\begin{lem}{\rm\cite{[LN]}}\label{TTSS}
Let $\F_p$ be a finite field and $\F_{p^k}$ be a
extension of $\F_p$. A multiplicative character $\chi$ of
$\mathbb F_q$ can be lifted by a multiplicative character $\chi^{'}$ of
$\F_p$ {\rm (``lift'' is defined as in Lemma \ref{DH})}
if and only if $\chi^{p-1}$ is trivial.
\end{lem}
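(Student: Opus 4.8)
The plan is to translate the notion of ``lift'' into a condition on the kernel of the norm map, and then to make that kernel completely explicit via a generator of $\F_q^{\ast}$. Throughout, write $N:={\rm N}_{\F_q/\F_p}$ and recall that $N$ is the group homomorphism $\F_q^{\ast}\to\F_p^{\ast}$ given by $N(x)=\prod_{i=0}^{k-1}x^{p^i}=x^{(q-1)/(p-1)}$, the exponent being $1+p+\cdots+p^{k-1}=(p^k-1)/(p-1)$. The first observation is that $\chi=\chi'\circ N$ forces $\chi$ to be trivial on $\ker N$, since $\chi(x)=\chi'(N(x))=\chi'(1)=1$ for every $x\in\ker N$; conversely, if $\chi$ is trivial on $\ker N$ then $\chi$ descends to a homomorphism on the quotient $\F_q^{\ast}/\ker N$, and composing with the isomorphism $\F_q^{\ast}/\ker N\xrightarrow{\sim}\F_p^{\ast}$ induced by $N$ produces a character $\chi'$ of $\F_p^{\ast}$ with $\chi=\chi'\circ N$. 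Thus the first step reduces the entire statement to the equivalence
\[
\chi|_{\ker N}\equiv 1 \iff \chi^{\,p-1}\text{ is trivial.}
\]

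Next I would compute $\ker N$ explicitly. Fixing a generator $g$ of the cyclic group $\F_q^{\ast}$, the formula $N(g^m)=g^{m(q-1)/(p-1)}$ shows that $g^m\in\ker N$ precisely when $(q-1)\mid m(q-1)/(p-1)$, i.e. when $(p-1)\mid m$. Hence $\ker N=\langle g^{p-1}\rangle$, a cyclic group of order $(q-1)/(p-1)$ generated by $g^{p-1}$; in particular this recovers the surjectivity of $N$ onto $\F_p^{\ast}$ used in the previous paragraph, since $|\operatorname{im} N|=(q-1)/|\ker N|=p-1$. Because $\ker N$ is generated by the single element $g^{p-1}$, the restriction $\chi|_{\ker N}$ is trivial if and only if $\chi(g^{p-1})=1$.

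Finally I would evaluate on the generator. Since $g$ generates $\F_q^{\ast}$, a character $\eta$ of $\F_q$ is trivial if and only if $\eta(g)=1$; applying this to $\eta=\chi^{p-1}$ gives that $\chi^{p-1}$ is trivial if and only if $\chi(g)^{p-1}=1$, i.e. if and only if $\chi(g^{p-1})=1$, which is exactly the condition $\chi|_{\ker N}\equiv1$ from the previous step. Chaining the three equivalences yields the lemma. The only genuinely substantive point is the identification $\ker N=\langle g^{p-1}\rangle$ together with the induced isomorphism $\F_q^{\ast}/\ker N\cong\F_p^{\ast}$; everything else is formal manipulation of characters of cyclic groups. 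I expect the main care to be needed in justifying that the descended character $\chi'$ is genuinely defined on all of $\F_p^{\ast}$ (not merely on the image of $N$), which is precisely where the surjectivity of $N$—equivalently the order count $(q-1)/|\ker N|=p-1$—is invoked.
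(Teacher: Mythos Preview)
Your proof is correct. The paper itself does not supply a proof of this lemma: it is stated with a citation to Lidl and Niederreiter \cite{[LN]} and used as a black box in the proof of Lemma~\ref{llllll}. So there is no argument in the paper to compare yours against, but the route you take---reducing to triviality on $\ker N$, identifying $\ker N=\langle g^{p-1}\rangle$, and then reading off the equivalence $\chi(g^{p-1})=1\Leftrightarrow\chi^{p-1}$ trivial---is a clean and standard way to establish the result.
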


Let $\lambda_1,\cdots,\lambda_n$ be nontrivial multiplicative
characters of $\F_q$. Then the sum
$$J(\lambda_1,\cdots,\lambda_n):=\sum_{(c_1,\cdots,c_n)\in \F_q^n
\atop c_1+\cdots+c_n=1}\lambda_1(c_1)\cdots\lambda_n(c_n)$$
is called a {\it Jacobi ~~sum} in $\F_q$. The following gives
a relation between Gauss sum and Jacobi sum.

\begin{lem}{\rm\cite{[LN]}}\label{JJJ}
Let $\lambda_1, ...,\lambda_{n-1}$ and $\lambda_n$ be nontrivial
multiplicative characters of $\F_q$ such that the product
$\lambda_1\cdots\lambda_n$ is nontrivial. Let $\psi$ be
a nontrivial additive character of $\F_q$. Then
$$J(\lambda_1,\cdots,\lambda_n)=\frac{G(\lambda_1,\psi)\cdots
G(\lambda_n,\psi)}{G(\lambda_1\cdots\lambda_n,\psi)}.$$
\end{lem}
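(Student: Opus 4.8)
The plan is to start from the right-hand side and expand the numerator as a single multiple sum. Writing each Gauss sum as $G(\lambda_i,\psi)=\sum_{x_i\in\F_q^\ast}\lambda_i(x_i)\psi(x_i)$ and multiplying out, one obtains
$$\prod_{i=1}^n G(\lambda_i,\psi)=\sum_{x_1,\dots,x_n\in\F_q^\ast}\lambda_1(x_1)\cdots\lambda_n(x_n)\,\psi(x_1+\cdots+x_n).$$
I would then partition this sum according to the value $t:=x_1+\cdots+x_n\in\F_q$, so that the exponential factor becomes simply $\psi(t)$ and the task reduces to evaluating, for each fixed $t$, the inner sum $S(t):=\sum_{x_1+\cdots+x_n=t,\,x_i\in\F_q^\ast}\lambda_1(x_1)\cdots\lambda_n(x_n)$.

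For $t\neq0$ I would use the change of variables $x_i=tc_i$. This is a bijection from the constraint set onto $\{(c_1,\dots,c_n):c_1+\cdots+c_n=1,\ c_i\neq0\}$, and since $\lambda_i(tc_i)=\lambda_i(t)\lambda_i(c_i)$, it factors the summand as $(\lambda_1\cdots\lambda_n)(t)\cdot\lambda_1(c_1)\cdots\lambda_n(c_n)$. Using the convention $\lambda_i(0)=0$ (valid since each $\lambda_i$ is nontrivial), the resulting sum over the $c_i$ is exactly the Jacobi sum as defined, so that $S(t)=(\lambda_1\cdots\lambda_n)(t)\,J(\lambda_1,\dots,\lambda_n)$ for every $t\neq0$.

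The delicate point, and the step I expect to be the main obstacle, is showing that the diagonal term $S(0)$ vanishes; this is precisely where the hypothesis that the product $\lambda:=\lambda_1\cdots\lambda_n$ is nontrivial enters. I would argue by a scaling symmetry: for any $a\in\F_q^\ast$ the map $(x_1,\dots,x_n)\mapsto(ax_1,\dots,ax_n)$ preserves both the condition $x_1+\cdots+x_n=0$ and the condition $x_i\neq0$, and multiplies the summand by $\lambda(a)$. Hence $S(0)=\lambda(a)\,S(0)$ for all $a\in\F_q^\ast$; choosing $a$ with $\lambda(a)\neq1$ (which exists exactly because $\lambda$ is nontrivial) forces $S(0)=0$.

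Putting these together, only the terms with $t\neq0$ survive, and
$$\prod_{i=1}^n G(\lambda_i,\psi)=J(\lambda_1,\dots,\lambda_n)\sum_{t\in\F_q^\ast}(\lambda_1\cdots\lambda_n)(t)\,\psi(t)=J(\lambda_1,\dots,\lambda_n)\,G(\lambda_1\cdots\lambda_n,\psi).$$
Finally, since $\lambda_1\cdots\lambda_n$ is nontrivial, Lemma \ref{G} gives $|G(\lambda_1\cdots\lambda_n,\psi)|=\sqrt q\neq0$, so I may divide by this Gauss sum to arrive at the claimed identity.
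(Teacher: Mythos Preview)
Your argument is correct and is essentially the standard proof (as in the cited reference \cite{[LN]}); the paper itself does not prove this lemma but merely quotes it. There is nothing to compare against, and no gap in your reasoning.
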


\begin{lem}\label{JLB}{\rm \cite{[BEW]}}
Let $p\equiv1\pmod3$ and $\chi$ be a multiplicative character
of order $3$ over $\mathbb{F}_p$. Then
$$2J(\chi,\chi)=r_1+3\sqrt3r_2{\rm i},$$
where $r_1$ and $r_2$ are uniquely determined by:
$$4p=r_1^2+27r_2^2,~~r_1\equiv1\pmod3,
~~9r_2\equiv\big(2g^{\prime\frac{p-1}{3}}+1\big)r_1\pmod{p}.$$
with $g^{\prime}$ being the generator of $\mathbb{F}_p^*$ such that
$\chi(g^{\prime})=\frac{-1+\sqrt{3}{\rm i}}{2}$.
\end{lem}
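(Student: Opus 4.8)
\emph{Proof strategy.} The plan is to recognize $2J(\chi,\chi)$ as an element of the Eisenstein ring $\mathbb{Z}[\omega]$, $\omega=\frac{-1+\sqrt{3}\,{\rm i}}{2}$, of absolute value $2\sqrt{p}$; to bring it into the shape $r_{1}+3\sqrt{3}\,r_{2}{\rm i}$ with $r_{1}\equiv1\pmod{3}$ by means of a congruence modulo $3$; and finally to remove the remaining sign ambiguity in $r_{2}$ by reducing modulo a prime of $\mathbb{Z}[\omega]$ above $p$ chosen compatibly with $\chi$. This is the classical cubic-Jacobi-sum evaluation (see \cite{[BEW]}); the skeleton is as follows.

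Since $\chi$ has order $3$, its values lie in $\{0,1,\omega,\omega^{2}\}$, so $J(\chi,\chi)=\sum_{t\in\F_{p}}\chi(t)\chi(1-t)=a+b\omega$ for suitable $a,b\in\mathbb{Z}$. Lemma~\ref{JJJ} with $\lambda_{1}=\lambda_{2}=\chi$ (here $\lambda_{1}\lambda_{2}=\chi^{2}=\overline{\chi}$ is nontrivial) together with Lemma~\ref{G} gives $|J(\chi,\chi)|^{2}=|G(\chi,\psi)|^{4}/|G(\overline{\chi},\psi)|^{2}=p$, that is, $a^{2}-ab+b^{2}=p$. Next I would invoke the congruence $J(\chi,\chi)\equiv-1\pmod{3}$ in $\mathbb{Z}[\omega]$: writing $\chi(t)=\omega^{m_{t}}$ on $\F_{p}^{*}$ and reducing modulo $(1-\omega)^{2}$ (which generates the ideal $(3)$, as $3$ ramifies), each of the $p-2$ nonzero summands is $\omega^{m_{t}+m_{1-t}}\equiv1-(m_{t}+m_{1-t})(1-\omega)$, while $\sum_{t\ne0,1}(m_{t}+m_{1-t})=2(p-1)$ is divisible by $3$, so the $(1-\omega)$-terms sum to $0$ modulo $(1-\omega)^{2}$; hence $J(\chi,\chi)\equiv p-2\equiv-1\pmod{3}$ because $p\equiv1\pmod{3}$. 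Thus $a\equiv-1$ and $b\equiv0\pmod{3}$. Setting $r_{2}:=b/3$, $r_{1}:=2a-b$, and using $2\omega+1=\sqrt{3}\,{\rm i}$, we obtain $2J(\chi,\chi)=2a+2b\omega=(2a-b)+b\sqrt{3}\,{\rm i}=r_{1}+3\sqrt{3}\,r_{2}{\rm i}$, with $4p=4(a^{2}-ab+b^{2})=(2a-b)^{2}+27(b/3)^{2}=r_{1}^{2}+27r_{2}^{2}$ and $r_{1}\equiv1\pmod{3}$.

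It remains to verify the congruence $9r_{2}\equiv\big(2g^{\prime(p-1)/3}+1\big)r_{1}\pmod{p}$. Since $4p=r_{1}^{2}+27r_{2}^{2}$ together with $r_{1}\equiv1\pmod{3}$ fixes $r_{1}$ and fixes $r_{2}$ up to sign, and $p\nmid r_{1}$, $p\nmid9r_{2}$ (as $r_{1},r_{2}$ are nonzero with $r_{1}^{2},27r_{2}^{2}<p^{2}$ and $p\ne3$), this congruence selects exactly one of $\pm r_{2}$, so it suffices to check it for the choice above. Let $\mathfrak{p}$ be the prime of $\mathbb{Z}[\omega]$ over $p$ with $\omega\equiv\omega_{p}:=g^{\prime(p-1)/3}\pmod{\mathfrak{p}}$; since $\chi(g^{\prime})=\omega$, this forces $\chi(t)\equiv t^{(p-1)/3}\pmod{\mathfrak{p}}$ for all $t\in\F_{p}$, and hence $J(\chi,\chi)\equiv\sum_{t\in\F_{p}}t^{(p-1)/3}(1-t)^{(p-1)/3}\pmod{\mathfrak{p}}$. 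Expanding the binomial, every exponent of $t$ that occurs lies in $[(p-1)/3,\,2(p-1)/3]\subseteq[1,p-2]$, so each inner sum $\sum_{t\in\F_{p}}t^{e}$ vanishes modulo $p$; hence $\mathfrak{p}\mid J(\chi,\chi)$, and since $J(\chi,\chi)\overline{J(\chi,\chi)}=p$, the ideal $(J(\chi,\chi))$ is prime of norm $p$ and contains $\mathfrak{p}$, so $(J(\chi,\chi))=\mathfrak{p}$. Reducing $a+b\omega\equiv0\pmod{\mathfrak{p}}$ gives $\omega_{p}\equiv-ab^{-1}\pmod{p}$ (note $p\nmid b$), and inserting this into the target congruence turns it into $4(a^{2}-ab+b^{2})\equiv0\pmod{p}$, which holds because $a^{2}-ab+b^{2}=p$. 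With the uniqueness noted above, this identifies $(r_{1},r_{2})$ as claimed.

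The hard part is this last, sign-fixing step: everything down to the normalization $r_{1}\equiv1\pmod{3}$ is routine, but pinning the sign of $r_{2}$ forces one to tie the transcendental number $J(\chi,\chi)$ to the explicit residue $g^{\prime(p-1)/3}$; concretely, one must choose the prime $\mathfrak{p}$ over $p$ so that $\omega\mapsto g^{\prime(p-1)/3}$ and then prove that $\mathfrak{p}$, not its conjugate, divides $J(\chi,\chi)$, for which the vanishing modulo $p$ of $\sum_{t}t^{(p-1)/3}(1-t)^{(p-1)/3}$ is the decisive input. A secondary subtlety is that $3$ ramifies in $\mathbb{Z}[\omega]$, so the congruence $J(\chi,\chi)\equiv-1$ must be proved modulo $(1-\omega)^{2}$, not merely modulo $1-\omega$, for the residue class of $r_{1}$ to come out correctly.
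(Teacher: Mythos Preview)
The paper does not prove this lemma at all: it is simply quoted from \cite{[BEW]} (Berndt--Evans--Williams) with no argument given, so there is no ``paper's own proof'' to compare against. Your proposal is a correct and essentially complete sketch of the classical evaluation of the cubic Jacobi sum as it appears in that reference: (i) $J(\chi,\chi)\in\mathbb{Z}[\omega]$ has norm $p$ by Lemmas~\ref{JJJ} and~\ref{G}; (ii) the congruence $J(\chi,\chi)\equiv -1\pmod{(1-\omega)^{2}}$ forces $a\equiv -1$, $b\equiv 0\pmod 3$, hence $r_{1}=2a-b\equiv 1\pmod 3$ and $4p=r_{1}^{2}+27r_{2}^{2}$; (iii) choosing the prime $\mathfrak{p}\mid p$ with $\omega\equiv g'^{(p-1)/3}$ and checking that $\sum_{t}t^{(p-1)/3}(1-t)^{(p-1)/3}\equiv 0\pmod p$ (since every exponent lies strictly between $0$ and $p-1$) gives $\mathfrak{p}=(J(\chi,\chi))$, whence $\omega_{p}\equiv -a/b$ and the target congruence reduces to $4(a^{2}-ab+b^{2})=4p\equiv 0\pmod p$. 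Each step is sound; your identification of the sign-fixing via $\mathfrak{p}$ as the crux is exactly right.

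Two small points worth tightening in a final write-up. First, when you write $\sum_{t\ne 0,1}(m_{t}+m_{1-t})=2(p-1)$ you are implicitly fixing representatives $m_{t}\in\{0,1,2\}$; it would be cleaner to note (as you almost do) that any change of representative alters the sum by a multiple of $3$, and $3(1-\omega)\in((1-\omega)^{2})$, so the conclusion is representative-independent. Second, the uniqueness claim ``$p\nmid r_{1}$, $p\nmid 9r_{2}$'' deserves one line: $r_{1},r_{2}\ne 0$ since $4p$ is neither a square nor $27$ times a square, and $r_{1}^{2},27r_{2}^{2}<4p<p^{2}$ gives $|r_{1}|,|r_{2}|<p$.
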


\begin{lem}{\rm\cite{[M]}}\label{S}
Let $q\equiv1\pmod3$ and for any $h\in \mathbb F_q$, define
$$S_h:=\sum_{y\in \mathbb F_q}\psi(hy^3).$$
Then for any generator $g$ of $\mathbb F_q^{\ast}$,
$S_g, S_{g^2}$ and $S_{g^3}$ are the roots of the cubic equation
$x^3-3qx-qc=0$, where $c$ is uniquely determined by (\ref{4'}).
\end{lem}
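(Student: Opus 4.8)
The plan is to reduce the sum $S_h=\sum_{y\in\F_q}\psi(hy^3)$ to Gauss sums and then recognize the elementary symmetric functions of $S_g,S_{g^2},S_{g^3}$ as the coefficients of the claimed cubic. First I would fix a nontrivial additive character $\psi$ and a multiplicative character $\chi$ of $\F_q^{\ast}$ of order $3$ (which exists since $q\equiv1\pmod 3$). The standard device is that, for $h\neq 0$, the number of solutions of $y^3=u$ weighted by $\psi(hu)$ gives $S_h=\sum_{u\in\F_q}\psi(hu)\bigl(1+\chi(u)+\overline\chi(u)\bigr)=\sum_{u\in\F_q^\ast}\psi(hu)\chi(u)+\sum_{u\in\F_q^\ast}\psi(hu)\overline\chi(u)$, since $\sum_{u}\psi(hu)=0$ by Lemma \ref{M}. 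Writing $\psi_h(x):=\psi(hx)$, each of these is a Gauss sum with respect to the character $\psi_h$; using $G(\chi,\psi_h)=\overline\chi(h)G(\chi,\psi)$ one gets
$$S_h=\overline\chi(h)G(\chi,\psi)+\chi(h)G(\overline\chi,\psi).$$
Since $h$ ranges over $g,g^2,g^3$ and $\chi$ has order $3$, the three values $\chi(g)^j$ (equivalently $\overline\chi(g)^j$) run over the three cube roots of unity $1,\omega,\omega^2$. Hence $S_g,S_{g^2},S_{g^3}$ are exactly the three numbers $\zeta\,G(\chi,\psi)+\overline\zeta\,G(\overline\chi,\psi)$ as $\zeta$ runs over the cube roots of unity.

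Next I would compute the elementary symmetric functions of these three numbers. Set $A:=G(\chi,\psi)$ and $B:=G(\overline\chi,\psi)$. Then $\sum_\zeta(\zeta A+\overline\zeta B)=A\sum_\zeta\zeta+B\sum_\zeta\overline\zeta=0$, so the coefficient of $x^2$ vanishes. For the pairwise products, $\sum_{\zeta\ne\zeta'}(\zeta A+\overline\zeta B)(\zeta' A+\overline{\zeta'}B)$ expands using $\sum_{\zeta}\zeta=0$ and $\sum_\zeta\zeta^2=0$ and $\sum_\zeta\zeta\overline\zeta=3$, and reduces to $-3AB$; by Lemma \ref{G}, $AB=G(\chi,\psi)G(\overline\chi,\psi)=\chi(-1)q=q$ (because $\chi(-1)=\chi((-1))=1$, since $-1=(-1)^3$ is a cube and $\chi$ has order $3$, or directly since $(-1)$ has order dividing $2$ which is coprime to $3$). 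So the coefficient of $x$ is $-3q$. Finally the product $\prod_\zeta(\zeta A+\overline\zeta B)$ expands to $A^3+B^3+AB\cdot(\text{terms})$; since $\sum_\zeta\zeta^3=3$, $\sum_\zeta\overline\zeta^3=3$, and the mixed terms carry $\sum_\zeta\zeta=0$, this equals $A^3+B^3$. Thus $S_g S_{g^2}S_{g^3}=G(\chi,\psi)^3+G(\overline\chi,\psi)^3$, and it remains to identify this with $qc$.

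For the last identification I would invoke the classical evaluation of the cubic Gauss sum via the Jacobi sum: by Lemma \ref{JJJ}, $G(\chi,\psi)^2=\chi(-1)\,G(\chi,\psi)G(\chi,\psi)=G(\chi^2,\psi)J(\chi,\chi)=G(\overline\chi,\psi)J(\chi,\chi)$, hence $G(\chi,\psi)^3=G(\chi,\psi)G(\overline\chi,\psi)J(\chi,\chi)=q\,J(\chi,\chi)$, and likewise $G(\overline\chi,\psi)^3=q\,\overline{J(\chi,\chi)}$ (using $\overline{G(\chi,\psi)}=\chi(-1)G(\overline\chi,\psi)$). Therefore $S_g S_{g^2}S_{g^3}=q\bigl(J(\chi,\chi)+\overline{J(\chi,\chi)}\bigr)=2q\,\mathrm{Re}\,J(\chi,\chi)$. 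It is a classical fact (and can be cited, e.g.\ via Lemma \ref{JLB} after lifting through Lemma \ref{TTSS} and Lemma \ref{DH} when $q>p$, or directly for $q=p$) that $2\,\mathrm{Re}\,J(\chi,\chi)=c$ where $c$ is the integer with $4q=c^2+27d^2$, $c\equiv1\pmod 3$, normalized as in (\ref{4'}). This gives $S_g S_{g^2}S_{g^3}=qc$, and together with the two previous symmetric functions shows $S_g,S_{g^2},S_{g^3}$ are the roots of $x^3-3qx-qc=0$.

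The main obstacle is the final normalization step: pinning down that the real part of the cubic Jacobi sum is precisely the $c$ from (\ref{4'}) — including the sign and the congruence $c\equiv 1\pmod 3$, and making the argument uniform in $k$ (for $q=p^k$ one must pass through the Hasse–Davenport relation of Lemma \ref{DH}, which introduces a sign $(-1)^{k-1}$ and raises the prime-field Gauss sum to the $k$-th power, so one should check this does not disturb the value of $\mathrm{Re}\,J$ relative to the $c$ attached to $q$). Everything else is a routine character-sum computation; I would present the symmetric-function calculation in full and cite the Jacobi-sum evaluation, or alternatively simply cite \cite{[M]} for the normalization and only record the structural identities $S_h=\overline\chi(h)G(\chi,\psi)+\chi(h)G(\overline\chi,\psi)$ that make the root identification transparent.
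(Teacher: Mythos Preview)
The paper does not actually prove this lemma; it is quoted from Myerson \cite{[M]} with no argument supplied. So there is no ``paper's own proof'' to match, and your proposal stands on its own merits.

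Your argument is correct. The reduction $S_h=\overline\chi(h)G(\chi,\psi)+\chi(h)G(\overline\chi,\psi)$ is exactly the identity the paper records as (\ref{eqN3'}), and your symmetric-function computation cleanly yields $e_1=0$, $e_2=-3G\overline G=-3q$, and $e_3=G^3+\overline G^3$ (the last via $(A+B)(A^2-AB+B^2)=A^3+B^3$). What remains is the identification $G^3+\overline G^3=qc$. Here it is worth pointing out that this identity is precisely the paper's Lemma~\ref{GS}, which the paper proves by an entirely different route: it extracts $N_2(-1)$ from Myerson's generating function (Theorem~\ref{MMMM}) and compares with a direct character-sum evaluation of $N_2(-1)$. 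Your Jacobi-sum approach ($G^3=q\,J(\chi,\chi)$, hence $G^3+\overline G^3=2q\,\mathrm{Re}\,J(\chi,\chi)$) is more self-contained and avoids appealing to the generating function for $N_s(0)$, at the price of the normalization check you flag. That concern is legitimate: for $q=p^k$ with $p\equiv1\pmod3$ the Hasse--Davenport sign $(-1)^{k-1}$ must be reconciled with the condition $c\equiv1\pmod3$ in (\ref{4'}), and for $p\equiv2\pmod3$ (where $k$ is even and $\chi$ does not lift from $\F_p$) one has $d=0$ and $c=\pm 2p^{k/2}$, so only the sign of $c$ is at stake. Either computation can be completed, but if you prefer you may simply invoke Lemma~\ref{GS} at that point, since the paper establishes it independently.
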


From the definition of $S_h$, we can deduce that
\begin{eqnarray}\label{sg}
  S_{g^{3k+i}}=S_{g^i}
\end{eqnarray}
for any integer $k$ and $i$. Consequently, we show several lemmas
that are needed in the proof of Theorem 1.2.

\begin{lem}\label{2.4}
Let $q\equiv1\pmod3$ and $\mathbb F_q^{\ast}=\langle g\rangle$. Then for
all $i\in\{0,1,2\}$ and any integer $m$, we have $N_s(g^{3m+i})=N_s(g^{i}).$
\end{lem}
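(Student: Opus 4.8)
The plan is to express $N_s(g^{3m+i})$ as an exponential sum over $\mathbb{F}_q$ using the additive characters, and then exploit the fact that cubing is a $3$-to-$1$ map on $\mathbb{F}_q^{\ast}$ whose image is exactly the set of cubes. Concretely, fix a nontrivial additive character $\psi$ of $\mathbb{F}_q$. By Lemma 2.1, for any $z\in\mathbb{F}_q$ we have
$$
N_s(z)=\frac{1}{q}\sum_{a\in\mathbb{F}_q}\sum_{x_1,\dots,x_s\in\mathbb{F}_q}
\psi\big(a(x_1^3+\cdots+x_s^3-z)\big)
=\frac{1}{q}\sum_{a\in\mathbb{F}_q}\psi(-az)\,S_a^{\,s},
$$
where $S_a=\sum_{y\in\mathbb{F}_q}\psi(ay^3)$ is the sum from Lemma 2.9. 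The $a=0$ term contributes $q^{s-1}$ regardless of $z$, so it suffices to show that $\sum_{a\in\mathbb{F}_q^{\ast}}\psi(-ag^{3m+i})S_a^s$ depends only on $\langle i\rangle_3$.

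The key step is the change of variable $a\mapsto ag^{-3m}$ in the sum over $a\in\mathbb{F}_q^{\ast}$: since $g^{-3m}=(g^{-m})^3$ is a nonzero cube, and since the map $y\mapsto g^{-m}y$ is a bijection of $\mathbb{F}_q$, one checks directly that $S_{ag^{-3m}}=\sum_{y}\psi(ag^{-3m}y^3)=\sum_{y}\psi(a(g^{-m}y)^3)=S_a$ — this is exactly the content of the already-noted relation \eqref{sg}, $S_{g^{3k+i}}=S_{g^i}$, applied more generally. Under this substitution, $\psi(-ag^{3m+i})$ becomes $\psi(-ag^{3m+i}g^{-3m})=\psi(-ag^i)$, while $S_a^s$ is unchanged. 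Hence
$$
\sum_{a\in\mathbb{F}_q^{\ast}}\psi(-ag^{3m+i})S_a^s
=\sum_{a\in\mathbb{F}_q^{\ast}}\psi(-ag^{i})S_a^s,
$$
and adding back the common $a=0$ term gives $N_s(g^{3m+i})=N_s(g^i)$, as desired. (Alternatively, and even more elementarily, one can avoid characters entirely: the substitution $x_j\mapsto g^{-m}x_j$ for each $j$ is a bijection of $\mathbb{F}_q^s$ carrying the solution set of $x_1^3+\cdots+x_s^3=g^{3m+i}$ onto that of $x_1^3+\cdots+x_s^3=g^i$, which proves the claim outright.)

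I do not expect any serious obstacle here; the lemma is essentially a bookkeeping statement saying that $N_s$ is constant on cosets of the cubes, and both routes above are short. The only mild point of care is to make sure the variable substitution is applied uniformly and that the $a=0$ (equivalently, the "all $x_j$ arbitrary") contribution is handled separately, since it is the one term not governed by the cube-coset symmetry. If the paper prefers the character-sum formulation — which seems likely, given that Lemmas 2.1 and 2.9 have just been set up — then invoking \eqref{sg} in its general form $S_{c^3 a}=S_a$ for $c\in\mathbb{F}_q^{\ast}$ is the cleanest way to close the argument.
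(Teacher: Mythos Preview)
Your proof is correct, but your main route is more elaborate than the paper's. The paper proves this lemma in one line by exactly the elementary substitution you relegate to a parenthetical remark at the end: writing
\[
N_s(g^{3m+i})
=\sum_{\substack{(x_1,\dots,x_s)\in\mathbb{F}_q^s\\ x_1^3+\cdots+x_s^3=g^{3m+i}}}1
=\sum_{\substack{(x_1,\dots,x_s)\in\mathbb{F}_q^s\\ (x_1/g^m)^3+\cdots+(x_s/g^m)^3=g^{i}}}1
=N_s(g^i),
\]
using the bijection $x_j\mapsto x_j/g^m$. Your character-sum argument (express $N_s(z)$ via $\psi$ and $S_a$, then substitute $a\mapsto ag^{-3m}$ and invoke $S_{c^3 a}=S_a$) is a faithful Fourier-side translation of the same bijection, and it works, but it imports machinery that the lemma does not need; in the paper the exponential-sum formula for $N_s(z)$ is only set up later, in the proof of Lemma~\ref{thm2}. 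The payoff of the direct approach is brevity and independence from the analytic setup; the payoff of yours is that it rehearses exactly the identity $S_{g^{3k+i}}=S_{g^i}$ that drives the later computations, so it is good intuition even if it is overkill here.
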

\begin{proof}By the definition of $N_s(z)$, we have
\begin{align*}
N_{s}(g^{3m+i})&=\sum\limits_{(x_1,\cdots,x_s)\in \mathbb F_q^{s} \atop{ x_1^3+\cdots+x_s^3=g^{3m+i}}}1=\sum\limits_{(x_1,\cdots,x_s)\in \mathbb F_q^{s} \atop{(x_1/g^m)^3+\cdots+(x_s/g^m)^3=g^{i}}}1=\sum\limits_{(x_1,\cdots,x_s)\in \mathbb F_q^{s} \atop{ x_1^3+\cdots+x_s^3=g^{i}}}1=N_{s}(g^{i})
\end{align*}
as desired.
\end{proof}

In the rest of this paper, we let $\chi$ be the multiplicative character
of order $3$ which is defined for the generator $g$ of $\mathbb F_q^*$ by
$\chi(g):=\frac{-1+\sqrt3{\rm i}}{2}$
and $\psi$ be the canonical additive character
which is defined for any $x\in\mathbb F_q$ by
$$\psi(x):=e^{2\pi {\rm Tr}_{\F_{q}/\F_{p}}(x){\rm i}/p}.$$
We denote $\overline{\chi}$ the {\it conjugate} character of $\chi$,
which means that $\chi\overline{\chi}$ is the trivial multiplicative
character. For convenience, we let
$G:=G(\chi,\psi).$
By Lemmas \ref{G}, we have $G(\chi,\psi)
G(\overline{\chi},\psi)=\chi(-1)q=\chi^3(-1)q=q$
and $|G(\chi,\psi)|=|G(\overline{\chi},\psi)|=\sqrt{q}$, one can deduce
that $G(\overline{\chi},\psi)=\overline{G}$. We have the following result
about Gauss sums.
\begin{lem}\label{GS}
Let $q\equiv 1\pmod3$. Then
$G^3(\chi,\psi)+G^3(\overline{\chi},\psi)=cq,$
where $c$ is uniquely determined by (\ref{4'}).
\end{lem}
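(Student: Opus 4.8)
The plan is to reduce Lemma~\ref{GS} to the known cubic equation for the cubic exponential sums $S_{g^i}$ from Lemma~\ref{S}. First I would recall that for the canonical additive character $\psi$ the sum $S_1=\sum_{y\in\mathbb F_q}\psi(y^3)$ can be expanded in multiplicative characters: since the map $y\mapsto y^3$ is $3$-to-$1$ on $\mathbb F_q^\ast$ (as $q\equiv1\pmod3$), one has $S_1=1+\sum_{y\in\mathbb F_q^\ast}\big(1+\chi(y)+\overline\chi(y)\big)\psi(y)=1+\big(\sum_{y\in\mathbb F_q^\ast}\psi(y)\big)+G(\chi,\psi)+G(\overline\chi,\psi)$. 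By Lemma~\ref{M} the bracketed sum is $-1$, so $S_1=G+\overline G$. More precisely, for a generator $g$ and $i\in\{0,1,2\}$ one gets $S_{g^i}=\chi^{-i}(g)\,G(\chi,\psi)+\chi^{i}(g)\,G(\overline\chi,\psi)$ after the substitution $y\mapsto y/g^{i/3}$-type bookkeeping — more cleanly, writing $S_{g^i}=1+\sum_{x\in\mathbb F_q^\ast}\big(1+\chi(x)+\overline\chi(x)\big)\psi(g^i x)$ and using $\sum_x\chi(x)\psi(g^i x)=\overline\chi(g^i)G(\chi,\psi)$, which gives $S_{g^i}=\overline\chi(g)^i G+\chi(g)^i\overline G$.

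Next I would compute the elementary symmetric functions of the three roots $S_1,S_g,S_{g^2}$ directly from these expressions. Since $\chi(g)$ is a primitive cube root of unity $\omega$, the three roots are $G+\overline G$, $\omega^2 G+\omega\overline G$, $\omega G+\omega^2\overline G$. Their sum is $(1+\omega+\omega^2)(G+\overline G)=0$, matching the missing $x^2$ term in $x^3-3qx-qc$. The product $S_1 S_g S_{g^2}$ expands, using $1+\omega+\omega^2=0$ and $\omega^3=1$, to $G^3+\overline G^3+(\text{cross terms})$; the cross terms collect into multiples of $(G^2\overline G+G\overline G^2)=G\overline G(G+\overline G)$ times $(1+\omega+\omega^2)=0$, leaving exactly $S_1 S_g S_{g^2}=G^3+\overline G^3 - 3(G\overline G)^{3/2}\cdot 0 = G^3+\overline G^3$. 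Wait—more carefully, the full expansion of $(a+b)(\omega^2 a+\omega b)(\omega a+\omega^2 b)$ with $a=G,b=\overline G$ is $a^3+b^3$ exactly (this is the standard factorization $a^3+b^3=(a+b)(a+\omega b)(a+\omega^2 b)$ up to relabeling the cube roots). On the other hand, by Lemma~\ref{S} the product of the three roots equals $qc$ (the constant term of $x^3-3qx-qc$, with sign $(-1)^3\cdot(-qc)=qc$). Therefore $G^3(\chi,\psi)+G^3(\overline\chi,\psi)=qc$, which is the claim.

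An alternative, perhaps cleaner route avoids recomputing symmetric functions: one checks that each $S_{g^i}=\overline\chi(g)^i G+\chi(g)^i\overline G$ satisfies $S_{g^i}^3-3q S_{g^i}-(G^3+\overline G^3)=0$, using $G\overline G=q$ (from Lemma~\ref{G}) to simplify $S_{g^i}^3=G^3+\overline G^3+3G\overline G(\overline\chi(g)^i G+\chi(g)^i\overline G)=G^3+\overline G^3+3q\,S_{g^i}$. Hence $S_1,S_g,S_{g^2}$ are the three roots of $x^3-3qx-(G^3+\overline G^3)=0$; comparing with Lemma~\ref{S}, which identifies this cubic as $x^3-3qx-qc=0$, forces $G^3+\overline G^3=qc$.

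The main obstacle is purely bookkeeping: getting the character-sum identity $\sum_{x\in\mathbb F_q^\ast}\chi(x)\psi(g^i x)=\overline\chi(g^i)G(\chi,\psi)$ right (a substitution $x\mapsto g^{-i}x$ in the Gauss sum) and correctly matching the constant term and its sign in Lemma~\ref{S}'s cubic $x^3-3qx-qc=0$ with the product of roots. Everything else is the classical factorization $a^3+b^3=\prod_{j=0}^2(a+\omega^j b)$ together with $G\overline G=q$ from Lemma~\ref{G}, so no genuinely hard step is expected.
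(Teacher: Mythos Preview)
Your proposal is correct but follows a different route from the paper. The paper computes $N_2(-1)$ in two ways: first from Myerson's generating function (Theorem~\ref{MMMM}) via the counting identity $N_3(0)=N_2(0)+(q-1)N_2(-1)$, which gives $N_2(-1)=q+c-2$; and second directly via character sums, obtaining $qN_2(-1)-q^2=G^3+\overline G^3-2q$. Equating the two yields the lemma. You instead feed the Gauss-sum expression $S_{g^i}=\overline\chi(g^i)G+\chi(g^i)\overline G$ (which is exactly the paper's identity~(\ref{eqN3'})) directly into Myerson's cubic from Lemma~\ref{S} and read off the constant term. Both arguments ultimately rest on Myerson's results, but yours avoids the detour through $N_2(-1)$ and the auxiliary counting identity, trading the invocation of Theorem~\ref{MMMM} for that of Lemma~\ref{S}. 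Your ``cleaner route'' is particularly efficient: since $G\overline G=q$ gives $S_{g^i}^3=G^3+\overline G^3+3q\,S_{g^i}$, the two monic cubics $x^3-3qx-qc$ and $x^3-3qx-(G^3+\overline G^3)$ share a root and hence coincide, so no symmetric-function bookkeeping is needed at all. One cosmetic point: Lemma~\ref{S} lists the roots as $S_g,S_{g^2},S_{g^3}$ rather than $S_1,S_g,S_{g^2}$, but $S_{g^3}=S_1$ by~(\ref{sg}), so this is harmless.
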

\begin{proof}
By Theorem \ref{MMMM}, we have
$$
\sum_{s=1}^{\infty} N_s(0) x^{s}=\frac{x}{1-q x}+\frac{x^{2}(q-1)(2+c x)}{1-3 q x^{2}-q c x^{3}},
$$
Comparing the coefficients of $x^2$ and $x^3$ on both sides gives us that
$N_2(0)=3q-2$ and $N_3(0)=q^2+cq-c$. Since
\begin{align*}
N_{3}(0)&=\sum\limits_{(x_1,x_2,x_3)\in \mathbb F_q^{3} \atop{ x_1^3+x_2^3+x_3^3=0}}1 \\
&=\sum_{(x_1,x_2)\in \mathbb F_q^2 \atop{ x_1^3+x_2^3=0}}1
    +\sum_{x_{3}\in \mathbb F_q^{\ast}}\sum_{(x_1,x_2)\in \mathbb F_q^2\atop {x_1^3+x_2^3=-x_{3}^3}} 1\\
&=N_2(0)+\sum_{x_{3}\in \mathbb F_q^{\ast}}\sum_{(x_1,x_2)\in \mathbb F_q^3\atop {x_1^3+x_2^3=-1}} 1 \\
&=N_2(0)+(q-1)N_{2}(-1),
\end{align*}
it follows that
\begin{eqnarray}\label{31}
N_2(-1)=\frac{N_3(0)-N_2(0)}{q-1}=q+c-2.
  \end{eqnarray}

On the other hand, by Lemma \ref{M}, we have
\begin{eqnarray*}
  N_2(-1)=\frac{1}{q}\sum_{m\in \mathbb F_q}\sum_{(a_1,a_2)\in \mathbb F_q^2}
  \psi\big(m(a_1^3+a_2^3+1)\big).
  \end{eqnarray*}
Using the fact that $\psi(x+y)=\psi(x)\psi(y)$
for any $x, y \in \mathbb{F}_q$, we derive that
\begin{align}\label{eqN3}
     N_2(-1) &=\frac{1}{q}\sum_{m\in \mathbb F_q}\psi(m)
     \Big(\sum_{a\in \mathbb F_q}\psi(ma^3)\Big)^2\notag\\
     & =q+ \frac{1}{q}\sum_{m\in \mathbb F_q^{\ast}}\psi(m)
   \Big(\sum_{a\in \mathbb F_q}\psi(ma^3)\Big)^2.
\end{align}
Note that if $q\equiv1\pmod3$, then $x^3=h$ has exactly $3$ zeros over $\mathbb F_q$
if $h\in \mathbb F_q^*$ is cubic, and there is no zero of
$x^3=h$ if $h\in \mathbb F_q^*$ is not cubic. Therefore
\begin{align}\label{zzz}
 \sum_{a\in \mathbb F_q} \psi(ma^3)
 =1+\sum\limits_{h\in \mathbb F_q^{\ast}\atop{h\text{\rm~ is a cubic}}}3\psi(mh).
\end{align}
Since $\chi$ is the multiplicative character of order $3$, we have
\begin{align}\label{aaa}
1+\chi(a)+\chi^2(a)=\left\{\begin{array}{lll}{0,} & {\text {\rm if }} & {a~{\rm  is \ non-cubic, }} \\
{3,} & {\text {\rm if }} & {a~{\rm is~ cubic. }}\end{array}\right.
 \end{align}
One puts (\ref{aaa}) into (\ref{zzz}) and obtains that
\begin{align}\label{eqN3'}
 \sum_{a\in \mathbb F_q}\psi(ma^3)&=
 1+\sum_{a\in \mathbb F_q^{\ast}} (1+\chi(a)+\chi^2(a))\psi(ma)\notag\\
    &=\sum_{a\in \mathbb F_q}\psi(ma)+\sum_{a\in \mathbb F_q^{\ast}}
    \chi(a)\psi(ma)\notag+\sum_{a\in \mathbb F_q^{\ast}} \chi^2(a)\psi(ma)\notag\\
    &=\sum_{a\in \mathbb F_q^{\ast}}
\overline{\chi}(m)\chi(ma)\psi(ma)+\sum_{a\in \mathbb F_q^{\ast}} \chi(m)\overline{\chi}(ma)\psi(ma)\notag\\
    &= \overline{\chi}(m)G(\chi,\psi)+\chi(m)G(\overline{\chi},\psi):=\overline{\chi}(m)G+\chi(m)\overline{G}.
\end{align}
Putting (\ref{eqN3'}) into (\ref{eqN3}) and
applying Lemmas \ref{M} and \ref{G}, it follows that
\begin{align}\label{3121}
qN_2(-1)-q^2 &=\sum_{m\in \mathbb F_q^*}\Big(\overline{\chi}(m)G+\chi(m)\overline{G}\Big)^2
  \psi(m) \notag \\
  &=\sum_{m\in \mathbb F_q^*}\Big(\chi(m)G^2+2G\overline{G}+\overline{\chi}(m)\overline{G}^2\Big)\psi(m) \notag \\
  &=G^2\sum_{m\in \mathbb F_q^*}\psi(m)\chi(m)+2G\overline{G}\sum_{m\in \mathbb F_q^*}\psi(m)
  +\overline{G}^2\sum_{m\in \mathbb F_q^*}\overline{\chi}(m)\psi(m)\notag\\
  &= G^3-2G\overline{G}+\overline{G}^3
  =G^3+\overline{G}^3-2q.
\end{align}
Finally, comparing (\ref{31}) with (\ref{3121}), one arrives at
$G^3(\chi,\psi)+G^3(\overline{\chi},\psi)=cq$ as required.

This finishes the proof of Lemma \ref{GS}.
\end{proof}

Notice that $G(\chi,\psi)$ and $G(\overline{\chi},\psi)$ are conjugate
to each other and by Lemma \ref{GS}, one has
$G^3(\chi,\psi)+G^3(\overline{\chi},\psi)=cq$. Thus we can write
$G^3(\chi,\psi)=\frac{cq}{2}+v{\rm i}$ for some real number $v$.
But Lemma \ref{G} tells us $|G^3(\chi,\psi)|=q^{\frac{3}{2}}$, one derives that
$$q^3=G^3(\chi,\psi)\overline{G^3(\chi,\psi)}=\frac{c^2q^2}{4}+v^2.$$
It then follows from (1.1) that
$$v^2=\frac{4q^3-c^2q^2}{4}=\frac{q^2(4q-c^2)}{4}=\frac{27q^2d^2}{4}.$$
Therefore
\begin{eqnarray}\label{2.7}
G^3(\chi,\psi)=\frac{cq}{2}+\theta(q)\frac{3\sqrt{3}qd}{2}{\rm i}
\end{eqnarray}
and
\begin{eqnarray}\label{2.7'}
G^3(\bar\chi,\psi)=\frac{cq}{2}-\theta(q)\frac{3\sqrt{3}qd}{2}{\rm i}
\end{eqnarray}
with $c$ and $d\ge0$ being determined by (1.1) and
$$
\theta(q):=\left\{\begin{array}{lll}{1} & {\text {\rm if }} & {{\rm Im}(G^3(\chi,\psi))>0,} \\
{0} & {\text {\rm if }} & {{\rm Im}(G^3(\chi,\psi))=0,} \\
{-1} & {\text {\rm if }} & {{\rm Im}(G^3(\chi,\psi)) < 0}.\end{array}\right.
$$
The following result determines the value of $\theta(q)$.

\begin{lem}\label{llllll}
Let $q=p^k\equiv 1\pmod3$. With the notation above, we have
\begin{equation*}
\theta(q)=\left\{\begin{array}{lll}{{\rm sign}\big({\rm Im}\big(
(r_1+3\sqrt3r_2{\rm i})^k\big)\big)} & {\text {\it if }} & {k\equiv1\pmod2},  \\
{0} & {\text {\it if }} & {k\equiv0\pmod2,}\end{array}\right.
\end{equation*}
where $r_1$ and $r_2$ are uniquely determined by:
\begin{equation}\label{rr}
4p=r_1^2+27r_2^2,~~r_1\equiv1\pmod3,
~~9r_2\equiv\big(2{\rm N}_{\F_{q}/\F_{p}}(g)^{\frac{p-1}{3}}+1\big)r_1\pmod{p}.
\end{equation}
\end{lem}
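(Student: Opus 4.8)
The plan is to reduce the cube $G^{3}(\chi,\psi)$ of the Gauss sum over $\F_q$ to Gauss and Jacobi sums over the prime field $\F_p$ by means of the Hasse--Davenport relation (Lemma~\ref{DH}), and then to feed in the explicit evaluation of the cubic Jacobi sum furnished by Lemma~\ref{JLB}. Since $q=p^{k}\equiv 1\pmod 3$, exactly one of two regimes occurs: either $k$ is odd, which forces $p\equiv 1\pmod 3$, or $k$ is even, in which case $p$ may be $\equiv 1$ or $\equiv 2\pmod 3$. I would organize the argument along this dichotomy; the even case is what produces the vanishing alternative in the statement.

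Assume first that $k$ is odd, so $p\equiv 1\pmod 3$ and $\chi^{p-1}$ is trivial. By Lemma~\ref{TTSS}, $\chi$ is the lift of a cubic character $\chi'$ of $\F_p$, and the canonical character $\psi$ is the lift of the canonical additive character $\psi'$ of $\F_p$. Put $g':={\rm N}_{\F_{q}/\F_{p}}(g)$; since the norm is surjective and $\F_q^{*}=\langle g\rangle$ we have $\F_p^{*}=\langle g'\rangle$, and from $\chi=\chi'\circ {\rm N}_{\F_{q}/\F_{p}}$ together with $\chi(g)=\frac{-1+\sqrt{3}{\rm i}}{2}$ it follows that $\chi'(g')=\frac{-1+\sqrt{3}{\rm i}}{2}$. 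Applying Lemma~\ref{JJJ} with $\lambda_{1}=\lambda_{2}=\chi'$ (legitimate, since $(\chi')^{2}=\overline{\chi'}$ is nontrivial), Lemma~\ref{G}, and the identity $\chi'(-1)=1$ (valid for any character of order $3$), one obtains $G^{3}(\chi',\psi')=p\,J(\chi',\chi')$, and hence, by Lemma~\ref{JLB} applied to the generator $g'$,
$$G^{3}(\chi',\psi')=\frac{p}{2}\bigl(r_{1}+3\sqrt{3}r_{2}{\rm i}\bigr),$$
where $r_{1},r_{2}$ are precisely the integers in the statement: the defining congruence for $r_{2}$ in Lemma~\ref{JLB}, written out for the generator $g'$, reads $9r_{2}\equiv(2\,{\rm N}_{\F_{q}/\F_{p}}(g)^{(p-1)/3}+1)r_{1}\pmod p$, i.e.\ exactly (\ref{rr}). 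Now Lemma~\ref{DH} with $k$ odd gives $G(\chi,\psi)=G(\chi',\psi')^{k}$, whence
$$G^{3}(\chi,\psi)=\bigl(G^{3}(\chi',\psi')\bigr)^{k}=\frac{q}{2^{k}}\bigl(r_{1}+3\sqrt{3}r_{2}{\rm i}\bigr)^{k}.$$
Comparing with (\ref{2.7}) and noting $\frac{3\sqrt{3}\,qd}{2}>0$ (indeed $d>0$ here, as $d=0$ would force $p\mid c$, against (\ref{4'})), we read off imaginary parts to obtain $\theta(q)={\rm sign}\bigl({\rm Im}\bigl((r_{1}+3\sqrt{3}r_{2}{\rm i})^{k}\bigr)\bigr)$, which is the asserted formula for odd $k$.

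It remains to treat the case $k$ even, where I would instead show that $G^{3}(\chi,\psi)$ is real. When $p\equiv 2\pmod 3$ this is immediate: because $\psi$ is the canonical character and ${\rm Tr}_{\F_{q}/\F_{p}}(x^{p})={\rm Tr}_{\F_{q}/\F_{p}}(x)$, the substitution $x\mapsto x^{p}$ in the defining sum gives $G(\chi^{p},\psi)=G(\chi,\psi)$; as $p\equiv 2\pmod 3$ we have $\chi^{p}=\chi^{2}=\overline{\chi}$, so $G(\overline{\chi},\psi)=G(\chi,\psi)$, and combined with $G(\overline{\chi},\psi)=\overline{G(\chi,\psi)}$ this forces $G(\chi,\psi)\in\Real$, hence $G^{3}(\chi,\psi)\in\Real$ and $\theta(q)=0$. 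The remaining sub-case, $p\equiv 1\pmod 3$ with $k$ even, is the step I expect to be the main obstacle: one reduces along the tower $\F_q/\F_p$ through Lemma~\ref{DH} as in the odd case and is then left to analyse the sign of ${\rm Im}\bigl((-1)^{k-1}(r_{1}+3\sqrt{3}r_{2}{\rm i})^{k}\bigr)$, where the Hasse--Davenport factor $(-1)^{k-1}$ must be balanced carefully against the normalization $d\ge 0$ of (\ref{4'}). The remaining ingredients---the reduction via Lemma~\ref{DH}, the evaluation $G^{3}(\chi',\psi')=p\,J(\chi',\chi')$ obtained from Lemmas~\ref{G} and \ref{JJJ}, and the identification of $g'={\rm N}_{\F_{q}/\F_{p}}(g)$ with the generator appearing in Lemma~\ref{JLB}---are routine.
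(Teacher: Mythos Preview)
Your odd-$k$ argument is correct and matches the paper's proof essentially line for line: both lift $\chi$ to a cubic character $\chi'$ on $\F_p$ via Lemma~\ref{TTSS}, convert $G^3(\chi',\psi')$ into $p\,J(\chi',\chi')$ through Lemmas~\ref{JJJ} and~\ref{G}, apply Lemma~\ref{JLB} with the generator $g'={\rm N}_{\F_q/\F_p}(g)$, and then pull the result up to $\F_q$ by Hasse--Davenport to obtain $G^3(\chi,\psi)=(p/2)^k(r_1+3\sqrt{3}r_2{\rm i})^k$.

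The gap is in your handling of even $k$. You split into $p\equiv 2$ and $p\equiv 1\pmod 3$ and leave the latter explicitly unfinished; the paper does not make this split at all. Its argument for even $k$ is a one-liner that avoids Gauss sums entirely: since $q=p^k$ is then a perfect square, the paper asserts that the relation $4q=c^2+27d^2$ in (\ref{4'}) forces $|c|=2p^{k/2}$ and $d=0$, whence $\theta(q)=0$ directly from (\ref{2.7}). This is the shortcut you were missing. That said, your hesitation about the sub-case $p\equiv 1\pmod 3$ is not misplaced: the paper's claim that $d=0$ sits uneasily with the coprimality clause $(c,p)=1$ in (\ref{4'})---for instance at $q=7^2=49$ one has $4q=13^2+27\cdot 1^2$ with $(13,7)=1$, so $d=1$, and the Hasse--Davenport computation you set up indeed gives ${\rm Im}\big(G^3(\chi,\psi)\big)\ne 0$ there. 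So while the paper's intended route for even $k$ is the direct ``$d=0$'' argument, the case $p\equiv 1\pmod 3$ with $k$ even genuinely warrants the closer analysis you flagged rather than a quick dismissal.
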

\begin{proof}
In the case $k\equiv0\pmod2$, which means that $q$ is a square. By (\ref{4'}), we have $4q=c^2+27d^2$ which
implies $|c|=2p^{\frac{k}{2}}$ and $d=0$, it follows that $\theta(q)=0$.

In the case $k\equiv1\pmod2$, one can deduce $p\equiv1\pmod3$, it follows that $\chi^{p-1}$ is trivial.
By Lemma \ref{TTSS}, The cubic multiplicative character $\chi$ can be
lifted by a cubic multiplicative character $\chi^{\prime}$ of $\F_p$. One can check ${\rm N}_{\mathbb F_q/\mathbb F_p}(g)=g^{\frac{p^s-1}{p-1}}$
is a generator of $\mathbb F_p^{*}$ which satisfies
$\chi(g)=\chi^{\prime}\big({\rm N}_{\mathbb F_q/\mathbb F_p}(g)\big)=\frac{-1+\sqrt{3}{\rm i}}{2}$.
In Lemma \ref{JJJ}, letting $\lambda_1=\lambda_2=\chi^{\prime}$ and
$\psi^{\prime}$ be the canonical additive character over $\F_p$
and apply Lemma \ref{G} gives us that
$$J(\chi^{\prime},\chi^{\prime})=\frac{G^2(\chi^{\prime},
\psi^{\prime})}{G(\overline{\chi}^{'},\psi^{\prime})}
=\frac{G^3(\chi^{\prime},\psi^{\prime})}{G(\overline{\chi}^{'},
\psi^{\prime})G(\chi^{\prime},\psi^{\prime})}
=\frac{G^3(\chi^{\prime},\psi^{\prime})}{p}.$$

In Lemma \ref{JLB}, let $g^{\prime}={\rm N}_{\F_{q}/\F_{p}}(g)$. We have
$G^3(\chi^{\prime},\psi^{\prime})=pJ(\chi^{\prime},\chi^{\prime})
=\frac{p}{2}(r_1+3\sqrt3r_2{\rm i}),$
where $r_1$ and $r_2$ are uniquely determined by (\ref{rr}).
By the Davenport-Hasse relation (Lemma 2.3), we have
$G(\chi,\psi)=(-1)^{k-1}G^k(\chi^{\prime},\psi^{\prime}).$
Therefore
\begin{equation}\label{FH}
G^3(\chi,\psi)=(-1)^{k-1}G^{3k}(\chi^{\prime},\psi^{\prime})
=\big(\frac{p}{2}\big)^k(r_1+3\sqrt3r_2{\rm i})^k.
\end{equation}
If $k\equiv1\pmod2$, then
$\theta(q)={\rm sign}\big({\rm Im}\big(G^3(\chi,\psi)\big)\big)
={\rm sign}\big({\rm Im}\big((r_1+3\sqrt3r_2{\rm i})^k\big) \big)$
as one expects. So Lemma 2.10 is proved.
\end{proof}

Finally, we show the following result as the conclusion of this section.

\begin{lem}\label{thm2}
Let $q\equiv1\pmod3$ and $u_s(z)=N_s(z)-q^{s-1}$. Then
for $z=g,g^2,g^3$ and $s=1,2,3$, the values of $u_s(z)$
are given in the following table:
\begin{table}[!htbp]
\centering
\begin{tabular}{|c|c|c|c|}
\hline
\diagbox{$z$}{$u_s(z)$}{$s$}&$1$&$2$&$3$\\ 
\hline
 $g$  & $-1$ & $-2-\frac{c}{2}+\frac{9d}{2}\theta(q)$  &  $-3q-c$\\
\hline
$g^2$ & $-1$ & $-2-\frac{c}{2}-\frac{9d}{2}\theta(q)$  &  $-3q-c$\\
\hline
$g^3$ & $2$  & $-2+c$                                  &  $6q-c$\\
\hline
\end{tabular}
\end{table}
\end{lem}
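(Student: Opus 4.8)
The plan is to write $u_s(z)$ as an exponential sum over $\F_q$ and then reduce everything to the Gauss sum identities already at our disposal. By Lemma \ref{M}, for any $z\in\F_q^{\ast}$ one has
$$N_s(z)=\frac1q\sum_{m\in\F_q}\psi(-mz)\Big(\sum_{a\in\F_q}\psi(ma^3)\Big)^{s},$$
and isolating the term $m=0$, which contributes exactly $q^{s-1}$, gives
$$u_s(z)=\frac1q\sum_{m\in\F_q^{\ast}}\psi(-mz)\big(\overline{\chi}(m)G+\chi(m)\overline{G}\big)^{s}$$
after inserting the identity (\ref{eqN3'}) for $\sum_{a\in\F_q}\psi(ma^3)$. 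Besides $G\overline{G}=q$ (Lemma \ref{G}) and $\chi^3=1$, I will need three elementary sums: $\sum_{m\in\F_q^{\ast}}\psi(-mz)=-1$ (Lemma \ref{M}, since $z\neq0$) and, after the substitution $m\mapsto-mz^{-1}$ together with $\chi(-1)=1$ (the value $\chi(-1)$ is at once a cube root and a square root of unity), $\sum_{m\in\F_q^{\ast}}\chi(m)\psi(-mz)=\overline{\chi}(z)G$ and $\sum_{m\in\F_q^{\ast}}\overline{\chi}(m)\psi(-mz)=\chi(z)\overline{G}$.

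The three columns of the table then follow by direct substitution. For $s=1$ one immediately gets $u_1(z)=\chi(z)+\overline{\chi}(z)$, which is $2$ when $z=g^3$ and $-1$ when $z=g$ or $z=g^2$ (recall $\chi(g)=\tfrac{-1+\sqrt3\,{\rm i}}{2}$). For $s=2$, expanding $(\overline{\chi}(m)G+\chi(m)\overline{G})^2$ and using $\overline{\chi}(m)^2=\chi(m)$, $\chi(m)^2=\overline{\chi}(m)$, $G\overline{G}=q$, then applying the three character sums, one arrives at
$$u_2(z)=\frac{G^3}{q}\,\overline{\chi}(z)+\frac{\overline{G}^3}{q}\,\chi(z)-2.$$
Substituting the explicit expressions (\ref{2.7}) and (\ref{2.7'}) for $G^3$ and $\overline{G}^3$, and the three values $\chi(z)\in\{\chi(g),\chi(g)^2,1\}$, yields $c-2$ for $z=g^3$ and the entries $-2-\tfrac c2\pm\tfrac{9d}{2}\theta(q)$ for $z=g,g^2$ (with $+$ for $g$, $-$ for $g^2$), once one extracts the appropriate real parts.

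For $s=3$ I would apply the identity $(\alpha+\beta)^3=\alpha^3+\beta^3+3\alpha\beta(\alpha+\beta)$ with $\alpha=\overline{\chi}(m)G$ and $\beta=\chi(m)\overline{G}$: since $\alpha^3=G^3$, $\beta^3=\overline{G}^3$ and $\alpha\beta=G\overline{G}=q$ for $m\neq0$, the cube collapses to $G^3+\overline{G}^3+3q\big(\overline{\chi}(m)G+\chi(m)\overline{G}\big)$. Summing $\tfrac1q\psi(-mz)$ against this over $m\in\F_q^{\ast}$, invoking $\sum_{m\in\F_q^{\ast}}\psi(-mz)=-1$ and Lemma \ref{GS} in the form $G^3+\overline{G}^3=cq$, and noticing that the remaining piece is exactly $3q\,u_1(z)$, one obtains $u_3(z)=3q\,u_1(z)-c$, hence $6q-c$ for $z=g^3$ and $-3q-c$ for $z=g,g^2$. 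The only genuinely delicate point is the $s=2$ case: one must carry the sign $\theta(q)$ correctly through the multiplication by the primitive cube roots of unity $\chi(g)$ and $\chi(g)^2$ and pick out the right real part. The $s=1$ and $s=3$ cases are essentially formal once the character-sum identities of the first paragraph are in place.
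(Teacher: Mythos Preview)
Your proof is correct and follows essentially the same route as the paper: express $qu_s(z)$ as $\sum_{m\in\F_q^{\ast}}\psi(-mz)\big(\overline{\chi}(m)G+\chi(m)\overline{G}\big)^{s}$ via Lemma~\ref{M} and (\ref{eqN3'}), evaluate the resulting character sums using $G\overline{G}=q$, $\chi(-1)=1$, Lemma~\ref{GS} and the explicit formulas (\ref{2.7})--(\ref{2.7'}), and then plug in $\chi(g),\chi(g^2),\chi(g^3)$. The only cosmetic difference is that for $s=3$ you invoke the factorization $(\alpha+\beta)^3=\alpha^3+\beta^3+3\alpha\beta(\alpha+\beta)$ to recognize $u_3(z)=3q\,u_1(z)-c$ directly, whereas the paper expands term by term; the two arguments are otherwise identical.
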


\begin{proof}
By Lemma \ref{M}, we have
\begin{align} \label{3131}
  N_s(z) &=      \frac{1}{q}\sum_{m\in \mathbb F_q}\sum_{(a_1,\cdots,a_s)\in \mathbb F_q^s}\psi\big(m(a_1^3+\cdots+a_s^3-z)\big)
      \notag \\
   &=q^{s-1}+ \frac{1}{q}\sum_{m\in \mathbb F_q^{\ast}}\psi(-mz) \Big(\sum_{a\in \mathbb F_q}\psi(ma^3)\Big)^s,
\end{align}
Putting (\ref{eqN3'}) into (\ref{3131}) gives us that
\begin{eqnarray*}
   qu_s(z)=qN_s(z)-q^s = \sum_{m\in \mathbb F_q^{\ast}}\psi(-mz)\big(\overline{\chi}(m)G+\chi(m)\overline{G}\big)^s.
\end{eqnarray*}
It then follows from Lemmas \ref{M}, \ref{G} and \ref{GS} that
\begin{align}\label{2.9}
   qu_1(z) &=\sum_{m\in \mathbb F_q^{\ast}}\psi(-mz)\big(\overline{\chi}(m)G+\chi(m)\overline{G}\big) \notag \\
&=\big(\chi(-z)\overline{G}G+\overline{\chi}(-z)G\overline{G}\big) \notag\\
&=\chi(z)q+\overline{\chi}(z)q,
\end{align}
\begin{align}\label{2.10}
qu_2(z)&=\sum_{m\in \mathbb F_q^{\ast}}\psi(-mz)\big(\overline{\chi}(m)G+\chi(m)\overline{G}\big)^2\notag\\
&=\sum_{m\in \mathbb F_q^{\ast}}\psi(-mz)\big(\chi(m)G^2+\overline{\chi}(m)\overline{G}^2+2q\big)\notag\\
&= \overline{\chi}(-z)G^3+\chi(-z)\overline{G}^3-2q\notag\\
&=-2q+\overline{\chi}(z)G^3+\chi(z)\overline{G}^3
\end{align}
and
\begin{align}\label{2.11}
 qu_3(z)&=\sum_{m\in \mathbb F_q^{\ast}}\psi(-mz)\big(\overline{\chi}(m)G+\chi(m)\overline{G}\big)^3\notag\\
   &= \sum_{m\in \mathbb F_q^{\ast}}\psi(-mz)\big(G^3+\overline{G}^3
   +3\overline{\chi}(m)G^2\overline{G}+3\chi(m)G\overline{G}^2\big)\notag\\
   &=-cq+3\chi(-z)G^2\overline{G}^2+3\overline{\chi}(-z)G^2\overline{G}^2\notag\\
    &=-cq+3\big(\chi(z)+\overline{\chi}(z)\big)q^2.
\end{align}

Since
$$\chi(g)=\bar\chi(g^2)=\frac{-1+\sqrt3{\rm i}}{2},
\chi(g^2)=\bar\chi(g)=\frac{-1-\sqrt3{\rm i}}{2}$$
and
$$\chi(g^3)=\bar\chi(g^3)=1,$$
putting (\ref{2.7}) and (\ref{2.7'}) into (\ref{2.9}), (\ref{2.10})
and (\ref{2.11}), the desired result follows immediately.

The proof of Lemma \ref{thm2} is complete.
\end{proof}

\section{Proof of Theorem 1.2}

In this section, we present the proof of Theorem \ref{thm3}.\\

{\it Proof of Theorem 1.2.} First of all, by (\ref{3131}), we have
\begin{eqnarray*}
  N_s(z)=q^{s-1}+ \frac{1}{q}\sum_{m\in \mathbb F_q^{\ast}}\psi(-mz)
  \Big(\sum_{x\in \mathbb F_q}\psi(mx^3)\Big)^s.
\end{eqnarray*}
It then follows from (2.1) that
\begin{align}\label{3.1}
&qu_s(z)\notag\\
= & qN_s(z)-q^s \notag \\
= &  \sum_{m\in \mathbb F_q^{\ast}}\psi(-mz)\Big(\sum_{x\in \mathbb F_q}\psi(mx^3)\Big)^s\notag\\
= &   \sum_{m \in \mathbb F_q^*\atop{{\rm ind}_g(m)\equiv1(\rm {mod}~3)}}\psi(-mz)S_m^s+
\sum_{m \in\mathbb F_q^*\atop{{\rm ind}_g(m)\equiv2(\rm {mod}~3)}}\psi(-mz)S_m^s+
\sum_{m \in \mathbb F_q^*\atop{{\rm ind}_g(m)\equiv0(\rm {mod}~3)}}\psi(-mz)S_m^s\notag\\
= &   S_g^s\sum_{m \in \mathbb F_q^*\atop{{\rm ind}_g(m)\equiv1(\rm {mod}~3)}}\psi(-mz)+
S_{g^2}^s \sum_{m \in \mathbb F_q^*\atop{{\rm ind}_g(m)\equiv2(\rm {mod}~3)}}\psi(-mz)+
S_{g^3}^s\sum_{m \in \mathbb F_q^*\atop{{\rm ind}_g(m)\equiv0(\rm {mod}~3)}}\psi(-mz)\notag\\
:=&S_g^s\eta_1(z)+S_{g^2}^s \eta_2(z)+ S_{g^3}^s\eta_3(z).
\end{align}
With $s$ replaced by $s-2$ and $s-3$ in (3.1), one obtains that
\begin{align}\label{3.2}
\sum_{i=1}^3S_{g^i}^{s-2}\eta_i(z)&=qu_{s-2}(z)
\end{align}
and
\begin{align}\label{3.3}
\sum_{i=1}^3S_{g^i}^{s-3}\eta_i(z)=qu_{s-3}(z).
\end{align}

On the other hand, for any integer $s\geq4$ and $i\in\{1,2,3\}$, one has by Lemma \ref{S}
$$S_{g^i}^s-3qS_{g^i}^{s-2}-qcS_{g^i}^{s-3}=0,$$
and so
$$S_{g^i}^s\eta_i(z)-3qS_{g^i}^{s-2}\eta_i(z)-qcS_{g^i}^{s-3}\eta_i(z)=0.$$
Then taking the sum gives us that
\begin{equation}\label{000'}
\sum_{i=1}^3S_{g^i}^s\eta_i(z)-3q\sum_{i=1}^3 S_{g^i}^{s-2}\eta_i(z)
-qc\sum_{i=1}^3 S_{g^i}^{s-3}\eta_i(z)=0.
\end{equation}
Hence putting the identities (\ref{3.1}) to (\ref{3.3})
into (\ref{000'}), one arrives at
$qu_s(z)-3q\big(qu_{s-2}(z)\big)-qc\big(qu_{s-3}(z)\big)=0$
which implies that
\begin{equation}\label{000}
u_s(z)-3qu_{s-2}(z)-qcu_{s-3}(z)=0
\end{equation}
for any $s\geq 4.$
It then follows from (\ref{000}) that
\begin{align*}
  &(1-3qx^2-qcx^3)\sum_{s=1}^{\infty}u_s(z)x^s\\
=&\sum_{s=1}^{\infty}u_s(z)x^s-3q\sum_{s=1}^{\infty}u_s(z)x^{s+2}-qc\sum_{s=1}^{\infty}u_s(z)x^{s+3}\\
=&\sum_{s=1}^{\infty}u_s(z)x^s-3q\sum_{s=3}^{\infty}u_{s-2}(z)x^{s}-qc\sum_{s=4}^{\infty}u_{s-3}(z)x^{s}\\
=&u_1(z)x+u_2(z)x^2+\big(u_3(z)-3qu_1(z)\big)x^3 +\sum_{s=4}^{\infty}\big(u_s(z)-3qu_{s-2}(z)-qcu_{s-3}(z)\big)x^{s}\\
=&u_1(z)x+u_2(z)x^2+\big(u_3(z)-3qu_1(z)\big)x^3:=f_z(x).
\end{align*}
Therefore
\begin{align*}
\sum_{s=1}^{\infty}u_s(z)x^s=\frac{f_z(x)}{ 1-3qx^2-qcx^3}.
\end{align*}
Since
$\sum_{s=1}^{\infty}q^{s-1}x^s=\frac{x}{1-qx}$
and $u_s(z)=N_s(z)-q^{s-1}$, we derive that
\begin{align}\label{4758495}
 \sum_{s=1}^{\infty}N_s(z)x^s =\sum_{s=1}^{\infty}\big(q^{s-1}+u_s(z)\big)x^s
 =\frac{x}{1-qx}+\frac{f_z(x)}{ 1-3qx^2-qcx^3}.
\end{align}

For any $z\in\mathbb{F}_q^*$, since $N_s(z)=N_s(g^{\langle{\rm ind}_g(z)\rangle_3})$
by Lemma \ref{2.4}, we need only to treat with the case $z\in\{g, g^2, g^3\}$.
By Lemma \ref{thm2}, we derive that
$$f_g(x)=-x-\big(2+\frac{c}{2}-\frac{9d}{2}\theta(q)\big)x^2-cx^3,
f_{g^2}(x)=-x-\big(2+\frac{c}{2}+\frac{9d}{2}\theta(q)\big)x^2-cx^3$$
and
$$f_{g^3}(x)=2x+(c-2)x^2-cx^3.$$
It then follows from (\ref{4758495}) that
$$\sum_{s=1}^{\infty}N_s(g^{3})x^s =\frac{x}{1-qx}+\frac{2x+(c-2)x^2-cx^3}{ 1-3qx^2-qcx^3},$$
$$
\sum_{s=1}^{\infty}N_s(g)x^s=\frac{x}{1-qx}-
\frac{x+\big(2+\frac{c}{2}-\frac{9d}{2}\theta(q)\big)x^2+cx^3}{ 1-3qx^2-qcx^3}
$$
and
$$\sum_{s=1}^{\infty}N_s(g^2)x^s =\frac{x}{1-qx}-
\frac{x+\big(2+\frac{c}{2}+\frac{9d}{2}\theta(q)\big)x^2+cx^3}{ 1-3qx^2-qcx^3}$$
as required.

Let $\delta_{z}(q):=(-1)^{\langle{\rm ind}_g(z)\rangle_3}\cdot\theta(q)$.
One can deduce that
$$\sum_{s=1}^{\infty}N_s(z)x^s =\frac{x}{1-qx}
-\frac{x+\big(2+\frac{c}{2}+\frac{9d}{2}\delta_{z}(q)\big)x^2+cx^3}{ 1-3qx^2-qcx^3}$$
if $z$ is not a cubic, and by Lemma \ref{llllll}, one derives that
\begin{equation*}
\delta_{z}(q)=\left\{\begin{array}{lll}{(-1)^{\langle{\rm ind}_g(z)\rangle_3}
\cdot{\rm sgn}\big(
{\rm Im}\big((r_1+3\sqrt3r_2{\rm i})^k\big)  \big)  } & {\text {\rm if }} & {k\equiv1\pmod2,} \\
{0} & {\text {\rm if }} & {k\equiv0\pmod2}.\end{array}\right.
\end{equation*}
This concludes the proof of Theorem \ref{thm3}. \hfill$\Box$

\section{Proofs of Theorems \ref{thmt1} and \ref{thmt2} and an example}
In this section, we prove Theorems \ref{thmt1} and \ref{thmt2}.
We begin with the proof of Theorem \ref{thmt1}.

\noindent{\it Proof of Theorem \ref{thmt1}.} Since $-1=(-1)^3$ is cubic, it follows that
\begin{align}\label{4.1}
T_s(y)&=\sum\limits_{(x_1,\cdots,x_{s})\in \mathbb F_q^{s} \atop{ x_1^3+\cdots+x_{s-1}^3+yx_{s}^3=0}}1 \notag\\
&=\sum_{(x_1,\cdots,x_{s-1})\in \mathbb F_q^{s-1} \atop{ x_1^3+\cdots+x_{s-1}^3=0}}1
+\sum_{x_{s}\in \mathbb F_q^{\ast}}\sum_{(x_1,\cdots,x_{s-1})
\in \mathbb F_q^{s-1}\atop {x_1^3+\cdots+x_{s-1}^3=-yx_{s}^3}} 1\notag\\
&=\sum_{(x_1,\cdots,x_{s-1})\in \mathbb F_q^{s-1} \atop{ x_1^3+\cdots+x_{s-1}^3=0}}1
+(q-1)\sum_{(x_1,\cdots,x_{s-1})\in \mathbb F_q^{s-1}\atop {x_1^3+\cdots+x_{s-1}^3=y}} 1 \notag \\
&=N_{s-1}(0)+(q-1)N_{s-1}(y).
\end{align}
By (\ref{4.1}), one can derive that
\begin{align}\label{4.2}
\sum_{s=1}^{\infty}T_{s+1}(y)x^s=(q-1)\sum_{s=1}^{\infty}N_{s}(y)x^s+\sum_{s=1}^{\infty}N_s(0)x^s.
\end{align}
Since $y\in \mathbb{F}^*_q$ is not cubic, applying Theorems 1.1 and 1.2
to $\sum_{s=1}^{\infty}N_{s}(y)x^s$ and $\sum_{s=1}^{\infty}N_s(0)x^s$,
respectively, by (4.2) we can derive that
\begin{align}
 \sum_{s=1}^{\infty}T_{s+1}(y)x^s
=&(q-1)\Big(\frac{x}{1-qx}- \frac{x+\big(2+\frac{c}{2}
+\frac{9d}{2}\delta_{y}(q)\big)x^2+cx^3}{ 1-3qx^2-qcx^3}\Big)\notag\\
&~~~~~~+  \Big(\frac{x}{1-q x}+\frac{x^{2}(q-1)(2+c x)}{1-3 q x^{2}-q c x^{3}} \Big)  \notag \\
    =& \frac{qx}{1-qx}- \frac{(q-1)x+(q-1)\big(\frac{c}{2}
+\frac{9d}{2}\delta_{y}(q)\big)x^2}{ 1-3qx^2-qcx^3} \notag
\end{align}
as required. This finishes the proof of Theorem \ref{thmt1}.   \hfill$\Box$

Consequently, we present the proof of Theorem \ref{thmt2}. \\

\noindent{\it Proof of Theorem \ref{thmt2}.} By Theorem \ref{thmt1}, we have
\begin{align}
(1-3qx^2-qcx^3)\sum_{s=1}^{\infty}(T_{s+1}(y)-q^s)x^s
=-(q-1)x-(q-1)\Big(\frac{c}{2}+\frac{9d}{2}\delta_{y}(q)\Big)x^2. \notag
\end{align}
Comparing the coefficients of $x^2$ on both sides gives us that
$T_{3}(y)=q^2+\frac{1}{2}(q-1)\big(-c-9d\delta_{y}(q)\big)$
as expected. This completes the proof of Theorem \ref{thmt2}. \hfill$\Box$\\

Finally, one observes that $p=31$ is the first prime number
satisfying that $p\equiv 1\pmod 3$ and $2$ is a cubic over
$\F_{p}$. In this case, Gauss \cite{[Gauss]} and Chowla,
Cowlers and Cowlers \cite{[CCC78]} did not determine the
exact value of the number $T_{3}(y)$ of zeros of
$x_1^3+x_{2}^3+yx_3^3=0$
over $\mathbb F_p$, where $y$ is non-cubic over $\mathbb F_p$.
We here can use Theorem 1.6 to give the exact value of $T_{3}(y)$
as the following example shows.\\

\noindent{\bf Example 4.1.} One can check that $g=3$ is a generator
of $\F_{31}^{*}$ and $2$ is a cubic element over $\F_{31}$.
If the positive integers $c, d, r_1$ and $r_2$ satisfy that
$4\cdot 31=c^2+27d^2 \text{~~with~~} c\equiv1\pmod3, ~~d\geq0$
and
$4\cdot31=r_1^2+27r_2^2,~~r_1\equiv1\pmod3,
~~9r_2\equiv\big(2\cdot3^{\frac{p-1}{3}}+1\big)r_1\pmod{31},$
then $c=4$, $d=2$, $r_1=4$ and $r_2=2$. Hence for any generator
$g$ of $\F_{31}^{*}$, one has $\delta_g(31)=(-1)\cdot{\rm sgn}
\big({\rm Im}(r_1+3\sqrt3r_2{\rm i})\big)=-1$ and $\delta_{g^2}(31)=1$.
It then follows from Theorem 1.6 that the numbers $T_3(g)$
and $T_3(g^2)$ of zeros $(x_1, x_2, x_3)\in{\mathbb F}_{31}^3$
of the cubic equations $x_1^3+x_{2}^3+gx_3^3=0$ and
$x_1^3+x_{2}^3+g^2 x_3^3=0$ over $\mathbb F_{31}$ are given by
$$T_{3}(g)=31^2+\frac{1}{2}(31-1)\big(-4-9\cdot2\cdot(-1)\big)=1171$$
and
$$T_{3}(g^2)=31^2+\frac{1}{2}(31-1)\big(-4-9\cdot2\big)=631,$$
respectively.   \hfill$\Box$

\bibliographystyle{amsplain}

\end{document}